\numberwithin{equation}{section} 
\numberwithin{figure}{section} 
\theoremstyle{plain}
\theoremstyle{definition}
\newtheorem{defn}{Definition}
\newtheorem*{defn*}{Definition}
\theoremstyle{plain}
 \newtheorem*{thm*}{Theorem}
\theoremstyle{plain}
\newtheorem{thm}{Theorem}[section]
\theoremstyle{definition}
 \newtheorem{example}[thm]{Example}
 \theoremstyle{plain}
 \newtheorem{lem}[thm]{Lemma}
 \theoremstyle{plain}
 \newtheorem{cor}[thm]{Corollary}
 \theoremstyle{plain}
 \newtheorem{prop}[thm]{Proposition}
 \theoremstyle{definition}
 \newtheorem{xca}[thm]{Exercise}
\newcommand{\bA}{\ensuremath{\mathbf{A}}}
\newcommand{\bB}{\ensuremath{\mathbf{B}}}
\newcommand{\bC}{\ensuremath{\mathbf{C}}}
\newcommand{\bS}{\ensuremath{\mathbf{S}}}
\newcommand{\bT}{\ensuremath{\mathbf{T}}}
\newcommand{\cN}{\ensuremath{\mathcal{N}}}
\newcommand{\cT}{\ensuremath{\mathcal{T}}}
\newcommand{\cI}{\ensuremath{\mathcal{I}}}
\newcommand{\cP}{\ensuremath{\mathcal{P}}}
\def\Ddots{\mathinner{\mkern1mu\raise\p@
\vbox{\kern7\p@\hbox{.}}\mkern2mu
\raise4\p@\hbox{.}\mkern2mu\raise7\p@\hbox{.}\mkern1mu}}
\begin{document}
\title{From indexed grammars to generating functions}
\author{Jared Adams}
\address{Department of Mathematics, Southern Utah University, Cedar City, UT, USA 84720}
\author{Eric Freden}
\address{Department of Mathematics, Southern Utah University, Cedar City, UT, USA 84720}
\author{Marni Mishna}
\address{Department of Mathematics, Simon Fraser University, Burnaby
 BC,  Canada V5A 1S6 }
\thanks{The third author gratefully acknowledges the support of NSERC Discovery
 Grant funding (Canada), and LaBRI (Bordeaux) for hosting during the
 completion of the work.}
\keywords{Indexed grammars, generating functions, formal language theory}
\date{8 August 2013}
\begin{abstract}
We extend the DSV method of computing the growth
series of an unambiguous context-free language to the larger class
of indexed languages. We illustrate the technique with numerous examples. 
\end{abstract}
\maketitle

\section{Introduction}
\subsection{Indexed grammars}
Indexed grammars were introduced in the thesis of Aho in the late
1960s to model a natural subclass of context-sensitive languages, more
expressive than context-free grammars with interesting closure
properties~\cite{Aho1968,HoUl1979}. The original reference for basic
results on indexed grammars is~\cite{Aho1968}.  The complete
definition of these grammars is equivalent to the following reduced
form.

\begin{defn}\label{def:IG}
A \emph{reduced indexed grammar\/} is a 5-tuple $(\cN,\cT,\cI,\cP,\bS)$,
such that
\begin{enumerate}
\item $\cN$, $\cT$ and $\cI$ are three mutually disjoint finite sets
 of symbols: the set~$\cN$ of \emph{non-terminals} (also called
\emph{variables}), $\cT$ is the set of \emph{terminals} and~$\cI$ is the
 set of \emph{indices} (also called \emph{flags});
\item $\bS\in \cN$ is the start symbol;
\item $\cP$ is a finite set of productions, each having the form of
 one of the following: 
\begin{enumerate}
\item $\bA\rightarrow\alpha$ \label{plain}
\item $\bA\rightarrow\bB_{f}$\qquad\emph{(push)}\label{push}
\item $\bA_{f}\rightarrow\beta$\,\,\qquad\emph{(pop)} \label{pop}
\end{enumerate}
where $\bA,\bB\in \cN$, $f\in \cI$ and
$\alpha,\beta\in(\cN\cup\cT)^{*}$.
\end{enumerate}
Observe the similarity to context-free grammars which are only defined
by production rules of type~\eqref{plain}. The language defined by an
indexed grammar is the set of all strings of terminals that can be
obtained by successively applying production rules begining with the
rule that involves the start symbol $\bS$. A key distinction from
context free grammars is that rather than expand non-terminals, we
expand non-terminal/stack pairs:~$(\bA, \iota)$, $\iota\in \cI^*,
\bA\in\cN$.  Here, the start symbol $\bS$ is shorthand for the pair
$(\bS, \epsilon)$, where $\epsilon$ denotes the empty stack.

Production rules in~$\cP$ are interpreted as follows.  The stack is
implicit, and is copied when the production is applied. For example,
the type~\eqref{plain} production rule $\bA\rightarrow a\bB\bC$ is 
shorthand for $ (\bA,\iota)\rightarrow a(\bB, \iota)(\bC, \iota)$, 
for~$\bA, \bB,\bC\in\cN$, $a\in\cT$ and $\iota\in \cI^*$.

A production rule of form~\eqref{push} encodes a push onto the stack,
and a production rule of the form~\eqref{pop} encodes a pop off of the
stack.  For example, the production rule $\bA\rightarrow\bB_f$ applied
to $(\bA, \iota)$ expands to $(\bB, \iota')$ where $\iota'$ is the
stack $\iota$ with the character $f$ pushed on. Likewise,
$\bA_{f}\rightarrow \beta$ can only be applied to $(\bA, \iota)$ if
the top of the stack string~$\iota$ is $f$. The result is $\beta$ such
that any nonterminal $\bB \in \beta$ is of the form $(\bB, \iota'')$,
where $\iota''$ is the stack $\iota$ with the top character popped
off.
\end{defn}

To lighten the notation the stack is traditionally written as a
subscript. Note the difference: the presence of a subscript in a
production rule is shorthand for an infinite collection of production
rules, whereas in a derivation the stack is viewed as part of the symbol.
Furthermore, it is also useful to introduce an end of stack symbol,
which we write $\$$. This symbol is reserved strictly for the last
position in the stack. This permits us to expand a non-terminal into a
terminal only when the stack is empty.  These subtleties are best made
clear through an example.

\begin{example}
The class of indexed languages is strictly larger than the class of context-free
languages since it contains the language
$\mathfrak{L}=\{a^{n}b^{n}c^{n}:n>0\}$. This language is generated by the indexed
grammar $(\{\bS, \bT, \bA,\bB, \bC\}, \{a,b,c\}, \{f\}, \cP,\bS)$ with 
\begin{multline*}
\cP=\left\{ 
\bS\rightarrow \bT_\$,\quad 
\bT\to \bT_f,\quad 
\bT\to \bA\bB\bC,\right.\\\left.
\bA_f\to a\bA,\quad \bA_\$\to a,\quad
\bB_f\to b\bB,\quad \bB_\$\to b,\quad
\bC_f\to c\bC,\quad \bC_\$\to c \right\}.
\end{multline*}
A typical derivation is as follows. We begin with $\bS$ and derive $aaabbbccc$: 
\begin{multline*}
\bS\to \bT_\$  \to \bT_{f\$} 
\to \bT_{ff\$} \to
\bA_{ff\$}\bB_{ff\$}\bC_{ff\$}\to a  \bA_{f\$}\bB_{ff\$}\bC_{ff\$}\to \\[2mm]
aa\bA_{\$}\bB_{ff\$}\bC_{ff\$} \to aaa\bB_{ff\$}\bC_{ff\$} 
\to aaab\bB_{f\$}\bC_{ff\$}\to\dots\to\,  aaabbbccc.
\end{multline*}
The generic structure of a derivation has the following three phases: First,
there is an initial push phase to build up the index string; This is followed
by a transfer stage where the stack is copied;  Finally, there is a
a pop stage which converts indices into terminal symbols. Most of
our examples have this same structure.

This particular grammar is easy to generalize to a language with some
fixed number~$k$ of symbols repeated~$n$ times:~$\{a_1^na_2^n\dots a_k^n:
n>0\}$.
\end{example}
In the case of reduced grammars, at most one index symbol is loaded or
unloaded in any production. We use two additional properties which do
not restrict the expressive power. A grammar is in \emph{strongly
 reduced form} as per our definition and if there are no useless
non-terminals. That is, every non-terminal $\mathbf{V}$ satisfies both
$S\overset{*}{\to}\alpha\mathbf{V}_{\sigma}\alpha'$ and
$\mathbf{V}_{\sigma}\overset{*}{\to}w$ for $\sigma\in\cI^*$,
$w\in\cT^*$ and $\alpha, \alpha'\in(\cT+\cN)^*$. A grammar is
$\varepsilon$-\textit{free} if the only production involving the empty
string $\varepsilon$ is $\mathbf{S}\rightarrow\varepsilon$.  Indexed
grammars $\mathcal{\mathfrak{G}}_{1}$ and $\mathcal{\mathfrak{G}}_{2}$
are \textit{equivalent} if they produce the same language
$\mathcal{\mathfrak{L}}$.

\begin{thm*}[\cite{PaDu1990}]
Every indexed grammar $\mathfrak{G}$ is equivalent to some strongly reduced,
$\varepsilon$-free grammar $\mathfrak{G}'$. Furthermore, there is an effective 
algorithm to convert $\mathfrak{G}$ to $\mathfrak{G}'$.
\end{thm*}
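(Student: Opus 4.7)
The plan is to decompose the claim into three effective transformations applied in sequence: (i) removal of useless non-terminals, (ii) elimination of $\varepsilon$-productions, and (iii) decomposition of every production so that at most one index is pushed or popped per rule. Each of these is a standard move in context-free normalization; the work consists in lifting them to the indexed setting, where the stack is unbounded. I expect step (ii) to be the principal obstacle because nullability of a non-terminal a priori depends on the contents of its index stack.

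For (i) I would use a fixed-point computation over non-terminals to identify those that can lead to a terminal string from some index stack. The classical two-phase context-free algorithm (productive non-terminals first, then reachable non-terminals) generalizes with care: productivity in an indexed grammar can be certified by derivations whose intermediate stack depth is bounded in terms of $|V|$ and $|I|$ via a pigeonhole pumping argument, so witnesses for different non-terminals can be glued along a common starting stack. Once the set of useful non-terminals is identified, deleting all other non-terminals and every production mentioning them preserves the language.

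For (ii) the challenge is that, in principle, $\mathbf{V}_\sigma\overset{*}{\longrightarrow}\varepsilon$ might require a highly particular $\sigma$. I would show, in the style of classical pumping-and-bounded-search lemmas for indexed languages, that the set of \emph{nullable} non-terminals —- those $\mathbf{V}$ for which some such $\sigma$ exists -— is the least fixed point of a computable operator: a shortest $\varepsilon$-derivation from $\mathbf{V}_\sigma$ cannot have intermediate stack height exceeding a bound depending only on $|V|$ and $|I|$, so nullability can be certified by a finite search. Once the nullable non-terminals are in hand, replace each production by all its $\varepsilon$-variants obtained by deleting subsets of nullable non-terminal occurrences from the right-hand side; if $\varepsilon\in\mathfrak{L}$, introduce a fresh start symbol $\mathbf{S}'$ together with the two rules $\mathbf{S}'\to\mathbf{S}$ and $\mathbf{S}'\to\varepsilon$. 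This preserves the language and leaves $\mathbf{S}'\to\varepsilon$ as the only $\varepsilon$-rule.

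For (iii), I would cascade any production carrying several indices through fresh auxiliaries. A push rule $\mathbf{A}\to\mathcal{U}\mathbf{B}_{f_1f_2\cdots f_k}\mathcal{U}'$ is replaced by
\[
\mathbf{A}\to\mathcal{U}\mathbf{A}^{(0)}\mathcal{U}',\quad \mathbf{A}^{(i)}\to\mathbf{A}^{(i+1)}_{f_{k-i}}\ \text{for } 0\le i<k-1,\quad \mathbf{A}^{(k-1)}\to\mathbf{B}_{f_1},
\]
and pops are handled symmetrically. A rule that mixes index manipulation with multi-symbol expansion is split into a pure expansion followed by a pure index manipulation. Re-running (i) at the end removes any auxiliaries rendered unreachable, and since neither (iii) nor the clean-up reintroduces $\varepsilon$-rules, the three passes compose to yield the effective conversion $\mathfrak{G}\leadsto\mathfrak{G}'$ asserted by the theorem.
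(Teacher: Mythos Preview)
The paper does not prove this theorem at all: it is stated with attribution to Parchmann and Duske~\cite{PaDu1990} and then simply invoked so that the authors may assume their grammars are already reduced and $\varepsilon$-free. There is therefore nothing in the paper to compare your argument against; any actual proof would have to be checked against the cited reference.

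That said, your sketch contains a genuine gap in step~(ii). In a context-free grammar, nullability is a property of the non-terminal alone: if $\mathbf{B}\overset{*}{\Longrightarrow}\varepsilon$ once, then every occurrence of $\mathbf{B}$ can be erased. In an indexed grammar this fails. Whether $\mathbf{B}_\sigma\overset{*}{\Longrightarrow}\varepsilon$ depends on the particular index string $\sigma$, and in a production $\mathbf{A}\to\mathbf{B}\mathbf{C}$ the stack $\sigma$ attached to $\mathbf{B}$ is inherited from $\mathbf{A}$ at derivation time. Your construction computes the set of non-terminals $\mathbf{B}$ for which \emph{some} $\sigma$ makes $\mathbf{B}_\sigma$ nullable, and then adds $\mathbf{A}\to\mathbf{C}$ unconditionally. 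This over-generates: the shortcut $\mathbf{A}_\tau\to\mathbf{C}_\tau$ is only sound for those $\tau$ with $\mathbf{B}_\tau\overset{*}{\Longrightarrow}\varepsilon$, and your transformed grammar has no mechanism to enforce that restriction. The standard repair is to exploit the fact (also in~\cite{PaDu1990}) that for each $\mathbf{B}$ the set $\{\sigma:\mathbf{B}_\sigma\overset{*}{\Longrightarrow}\varepsilon\}$ is a regular language over $I$, and to fold a recognizing automaton for that set into the non-terminal alphabet so that the grammar itself can test, via the index stack, whether the erasure is legitimate. Without something of this kind, step~(ii) does not preserve the language.
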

Consequently we can assume all grammars are already strongly reduced
(most of our examples are). On the other hand, we have found that
$\varepsilon$-productions are a useful crutch in designing grammars
(several of our examples are not $\varepsilon$-free).

\subsection{The set of indexed languages}
The set of all languages generated by indexed grammars forms the set
of indexed languages.  As alluded to above, this is a full abstract
family of languages which is closed under union, concatenation, Kleene
closure, homomorphism, inverse homomorphism and intersection with
regular sets. The set of indexed languages, however is not closed
under intersection or complement. The standard machine type that
accepts the class of indexed languages is the nested stack automaton.

This class of languages properly includes all context-free
languages. These are generated by grammars such that $\cI$ is empty.
One way to view indexed grammars is as an extension of context-free
grammars with an infinite number of non-terminals, however the
allowable productions are quite structured.  Furthermore, it is a
proper subset of the class of context-sensitive languages. For
instance $\left\{ \left(ab^{n}\right)^{n}:\ n\geq0\right\} $ is
context-sensitive but not indexed~\cite{Gilm1996}.

Formal language theory in general and indexed languages in particular
have applications to group theory. Two good survey articles are
~\cite{Rees1999} and~\cite{Gilm2005}. Bridson and
Gilman~\cite{BrGi1996} have exhibited indexed grammar combings for
fundamental $3$-manifold groups based on Nil and Sol geometries (see
Example~\ref{ex:BG} below).  More recently~\cite{HoRo2006} showed that
the language of words in the standard generating set of the Grigorchuk
group that do not represent the identity (the so-called \emph{co-word}
problem) forms an indexed language. The original DSV method
(attributed to Delest, Sch\"utzenberger, and Viennot~\cite{Del1996})
of computing the growth of a context-free language was successfully
exploited~\cite{FrSc2008} to compute the algebraic but non-rational
growth series of a family of groups attributed to Higman. One of our
goals is to extend this method to indexed grammars to deduce results
on growth series.

\subsection{Ordinary generating functions}
\label{sec:ogf}
Generating functions are well-suited to answer  enumerative questions
about languages over finite alphabets, in particular, the number of
words of a given length. For any language $\mathfrak{L}$ with finite alphabet
$\cT$, let $L_{n}$ be the number of words of length $n$. The
\emph{ordinary generating function} of the language is the formal power
series $L(z)=\sum_{n\geq0}L_{n}z^{n}$.  We use this terminology
interchangeably with \emph{growth series}. Note that each $L_n$ is bounded
by the number of words of
length $n$ in the free monoid $\cT^*$. Consequently, the
generating function $L(z)$ has a positive radius of convergence.

One motivation for our study is to understand the enumerative nature
of classes of languages beyond context-free, and simultaneously to
understand the combinatorial nature of function classes beyond
algebraic. To be more precise, it is already known that regular
languages have generating functions that can be expressed as rational
functions, i.e. the Taylor series of a function $P(z)/Q(z)$ where $P$
and $Q$ are polynomials. Furthermore, unambiguous context free
languages have algebraic generating functions, that is, they satisfy
$P(L(z),z)$ for some bivariate polynomial $P(x,y)$ with integer
coefficients. This kind of generating function property has
consequences on the asymptotic form, and can be used to exclude
languages from the companion classes, by analytic arguments on the
generating function. For example, Flajolet~\cite{Flaj1987} proves the
inherent ambiguity of several context-free languages by demonstrating
the transcendence of their generating functions.

There are two natural contenders for function classes that may capture
indexed grammars: \emph{D-finite} and \emph{differentiably 
 algebraic}. A series is said to be \emph{D-finite} if it satisfies a
homogeneous linear differential equation with polynomial
coefficients. A series $L(z)$ is said to be \emph{differentiably
 algebraic} if there is a non-trivial $k+1$-variate polynomial
$P(x_{0},x_{1},\dots,x_{k})$ with integer coefficients such that
\[P\left(
 L(z),\frac{d}{dz}L(z),\frac{d^{2}}{dz^{2}}L(z),\dots,\frac{d^{k}}{dz^{k}}L(z)\right)
\equiv0.\] 
We prove that neither of these classes capture indexed 
grammars. In fact, many of our examples of indexed grammars have
lacunary generating functions, with a natural boundary at the unit
circle, because they are so sparse. This is perhaps unsatisfying, but
it also illustrates a key difference between computational complexity
and analytic complexity; a distinction which is not evident after
studying only context-free and and regular languages. 

That said, the expressive power of growth series derived from indexed
languages has been broached by previous authors. In~\cite{LK2003}, the
authors consider a limitation on possible productions, which is close
to one of the restrictions we consider below. They are able to
describe the recurrence type satisfied by all sequences $u(n)$ for
which the corresponding language~$\{a^{u(n)}\}$ is generated by this
restricted set of indexed grammars.

Furthermore, we mention that other characterizations of indexed
languages are equally amenable to analysis. In particular, indexed
languages are equivalent to sequences of level 2 in the sense
of~\cite{Seni2007}, and have several different descriptions. Notably,
they satisfy particular systems of catenative recurrent relations, to
which methods comparable to what we present here may apply. By taking
combinations of such sequences, Fratani and
Senizergues~\cite{FrSe2005} can give a characterization of D-finite
functions with rational coefficients. 

A second motivation for the present work is to verify that various growth
rates are achievable with an indexed grammar. To that end, we often start
with a language where the enumeration is trivial, but the challenge is
to provide an indexed grammar that generates it. Our
techniques verify that the desired growth rate has been achieved.

\subsection{Summary}
Ideally, we would like to describe an efficient algorithm to determine
the generating function of an indexed language given only a
specification of its indexed grammar.  Towards this goal we first
describe a process in the next section that works under some
conditions including only one stack symbol (excluding the end of stack
symbol).
Proposition~\ref{prop:push-to-inf II} summarizes the conditions, and the
results. We have several examples to illustrate the procedure. In
Section~\ref{sec:multiple} this is generalized to multiple stack
symbols that are pushed in order. In this section we also illustrate
the inherent obstacles in the case of multiple stack symbols. This is
followed by some further examples from number theory in
Section~\ref{sec:numtheory} and a discussion in
Section~\ref{sec:ambiguity} on inherent ambiguity in indexed grammars.

\subsection{Notation}
Throughout, we use standard terminology with respect to formal
language~theory.  The expression $x\vert y$\ \ denotes ``$x$
exclusive-or $y$''.  We use epsilon {}``$\varepsilon$'' to denote the
empty word. The Kleene star operator applied to $x$, written $x^{*}$
means make zero or more copies of $x$. A related notation is $x^{+}$
which means make one or more copies of $x$. The word reversal of $w$
is indicated by $w^{R}$.  The length of the string $x$ is denoted
$\vert x\vert$. We print grammar variables in upper case bold
letters. Grammar terminals are in lower case italic. We use the symbol
$\overset{*}{\to}$ to indicate the composition of two or more grammar
productions.

\section{The case of one index symbol}
\label{sec:onesymb}
\subsection{Generalizing the DSV process}
A central objective of this work is to answer enumerative questions
about the number of words of a given length in a language defined by
an indexed grammar. It turns out that in many cases the classic
translation of production rules into functional equations satisfied by
generating functions works here. This type of strategy was first
applied to formal languages in~\cite{ChSc1963}, and the ideas have
been expanded to handle more general combinatorial
equations~\cite{FlSe2009}. We summarize the process below. The challenge
posed by indexed grammars is that an infinite number of equations are
produced by the process. Nonetheless, we identify some sufficiency criteria which allow
us to solve the system in some reasonable way.

First, let us recall the translation process for context-free
grammars. Let $\mathfrak{G}$ be a context-free
grammar specification for a non-empty language $\mathfrak{L}$, with
start symbol \textbf{$\mathbf{S}$}. For each production, replace each
terminal with the formal variable $z$ and each non-terminal $\bA$ with
a formal power series $A(z)$. Translate the grammar symbols
$\rightarrow,\ |,\ \varepsilon$ into $=,\ +,\ 1$, respectively, with
juxtaposition becoming commutative multiplication.  Thus the grammar
is transformed into a system of equations. We summarize the main
results of this area as follows.
\begin{thm*}[Chomsky-Sch\"utzenberger]
Each formal power series $A(z)=\sum A_{n}z^{n}$ in the above transformation
is an ordinary generating function where $A_{n}$ is an integer representing
the number of word productions of length $n$ that can be realized
from the non-terminal $\bA$. In particular, if the original
context-free grammar is unambiguous, then $S(z)$ is the growth series
for the language $\mathfrak{L}$, in which case $S(z)$ is an algebraic
function. 
\end{thm*}

A context-free grammar has only finitely many non-terminals. In the
case of an indexed grammar, we treat a single variable $\bA$ as
housing recursively many non-terminals, one for each distinct index
string carried by $\bA$ (although only finitely many are displayed in
parsing any given word). To generalize the DSV procedure to indexed
grammars we apply the same transformation scheme to the grammar,
under the viewpoint that every production rule is shorthand for an infinite
set of productions, where non-terminals are paired with index
strings. The generating functions are thus also similarly indexed, for
example $\bA_{gfghgf\$}$ gives rise to $A_{gfghgf\$}(z)$.

Initially, this is most unsatisfying, since the transformation recipe
produces a system of \emph{infinitely many equations in infinitely
 many functions}! We are unable to describe a general scheme to solve
these equations, and even in some small cases we do not obtain
satisfying expressions (see Example~\ref{ex:difficult_recursion} below). However, if there is only one index symbol
(disregarding the end of stack symbol), and some other conditions are
satisfied, we can outline a procedure to reduce the system. When there
is only one stack symbol, it is sufficient to identify only the
size of the stack in a non-terminal, stack pair. For example, in the
translation to functions $\bA_{ffffff\$}$ becomes $A_6(z)$.

\begin{example}
\label{ex:an2} The language 
$\mathfrak{L}_{sqr}=\left\{ a^{2^{n}}:\ n\geq0\right\} $
is generated by an indexed grammar. The enumeration for this
example is trivial but the example serves a pedagogical
purpose, as it sets up the process in the case of one index
symbol, and provides an example of an indexed language whose
generating function is not differentiably algebraic. 

As usual, we use \$ to indicate the bottom-most index symbol. Disregarding this,
there is only one index symbol actually used, and so in the
translation process we note only the size of the stack, not its
contents. Furthermore we identify $S_0(z)=S(z)$.

\begin{equation*}
\begin{array}{llll}
\mathbf{S}\rightarrow\mathbf{T}_{\$}&\mathbf{T}\rightarrow\mathbf{T}_{f}\vert\mathbf{D}&\mathbf{D}_{f}\rightarrow\mathbf{DD}&\mathbf{D}_{\$}\rightarrow
a\\[.25cm]
S_0(z)=T_0(z)& T_n(z)=T_{n+1}(z)+D_n(z)&
D_{n+1}(z)=D_n(z)^2& D_0(z)=z.\\[.25cm]
\end{array}
\end{equation*}

Observe that indices are loaded onto $\mathbf{T}$ then transferred to 
$\mathbf{D}$ which is then repeatedly doubled ($\mathbf{D}$ is a mnemonic
for \lq\lq duplicator\rq\rq ). After all doubling, each instance of 
$\mathbf{D}_\$ $ becomes an $a$.

Immediately we solve $D_n(z)=D_0(z)^{2^n}=z^{2^n}$, and the system of grammar 
equations becomes
\[
S_0(z)=T_{0}(z)=T_{1}(z)+D_{0}(z)=T_{2}+D_{1}+D_{0}=\cdots=\sum_{n\geq0}D_{n}(z)
=\sum_{n\geq0}z^{2^{n}}.  \]
We observe that the sequence of partial sums converge as a power
series inside the unit circle, and that the $T_n(z)$ are incrementally 
eliminated. We refer to this process, summarized in
Proposition~\ref{prop:push-to-inf} below, as \emph{pushing the $T_{n}(z)$
off to infinity}. 

The function $S(z)$ satisfies the functional equation
$S(z)=z+S(z^2)$. The series diverges at $z=1$, and hence $S(z)$ is
singular at $z=1$. However, by the functional equation it also
diverges at $z=-1$. By repeated application of this argument, we can
show that $S(z)$ is singular at every $2^n-th$ root of unity. Thus it
has an infinite number of singularities, and it cannot be D-finite.
In fact, $S(z)$ satisfies no algebraic differential equation
\cite{LipRub1986}. Consequently, the class of generating functions for
indexed languages is not contained in the class of differentiably
algebraic functions.

\end{example}
\subsection{A straightforward case: Balanced indexed grammars}
Next, we describe a condition that allows us to guarantee that this
process will result in a simplified generating function expression for $S(z)$.

\begin{defn}
An indexed grammar is \emph{balanced} provided there are constants
$C,K\geq0$, depending only on the grammar, such that the longest string
of indices associated to any non-terminal in any sentential form $\mathcal{W}$
has length at most $C\vert w\vert+K$ where $w$ is any terminal word
produced from $\mathcal{W}$. (Note: in all our balanced examples
we can take $C=1$ and $K\in\left\{ 0,1\right\}$.)\end{defn}

\begin{prop}
 \label{prop:push-to-inf} Let $\mathfrak{G}=(\cN,\cT,\cI,\cP,\bS)$ be
 an unambiguous, balanced, indexed grammar in strongly reduced form
 for some non-empty language $\mathfrak{L}$ with
 $\cI=\{f\}$. Furthermore, suppose that $\mathbf{V}\in\cN$ is the
 only non-terminal that loads $f$ and that the only allowable production 
 in which $\mathbf{V}$ appears on the right side is 
 $\mathbf{S}\to\mathbf{V}_\$ $. Then in the generalized DSV
 equations for $\mathfrak{G}$, the sequence of functions
 $V_n(z)\equiv V_{f^n}(z)$ can be eliminated (pushed to
 infinity). Under these hypotheses, the system of equations defining
 $S(z)$ reduces to finitely many bounded recurrences with initial
 conditions whose solution is the growth function for $\mathfrak{L}$.
\end{prop}
\begin{proof}
By hypothesis, there is only one load production and it has form
$\mathbf{V}\to\mathbf{V}_f$. Without loss of generality we may assume 
there is no type~\eqref{pop} rule $\mathbf{V}_f\to\beta$ (in fact, such a 
rule can be eliminated by the creation of a new variable $\mathbf{U}$ 
and adding new productions $\mathbf{V}\to\mathbf{U}$ and 
$\mathbf{U}_f\to\beta$ ). Necessarily there will be at least one context-free 
type rule $\mathbf{V}\to\beta$ (else $\mathbf{V}$ is a useless symbol).

Consider all productions in $\mathcal{\mathfrak{G}}$ that have $\mathbf{V}$
on the left side. Converting these productions into the usual functional
equations and solving for $V_n(z)$ gives an equation of form 
\[ V_{n}(z)=V_{n+1}(z)+W_{n\pm e}(z) \]
where $W_{n\pm e}(z)$ denotes an expression that represents all
other grammar productions having $\mathbf{V}$ on the left side and
$e\in\left\{ 0,1,-1\right\} $. 

We make the simplifying assumption that
$e=0$ for the remainder of this paragraph.
Starting with $n=0$ and iterating $N\gg0$ times yields $V_{0}(z)=V_{N}(z)+W_{0}(z)+W_{1}(z)+\cdots+W_{N}(z)$.
By the balanced hypothesis, there exists a constants $C,K\geq0$ such
that all terminal words produced from $\mathbf{V}_{f^{N}}$ have length
at least $N/C-K\gg0.$ This means that the first $N/C-K$ terms in the
ordinary generating function for $V_{0}(z)$ are unaffected by the 
contributions from $V_{N}(z)$
and depend only on the fixed sum $W_{0}(z)+W_{1}(z)+\cdots+W_{N}(z)$.
Therefore the $(N/C-K)^{th}$ partial sum defining the generating function for 
$V_{0}(z)$
is stabilized as soon as the iteration above reaches $V_{N}(z)$.
This is true for all big $N$, so we may take the limit as $N\rightarrow\infty$
and express $V_{0}(z)=\sum_{n\geq0}W_{n}(z)$ .

Allowing $e=\pm1$ in the previous paragraph
merely shifts indices in the sum and does not affect the logic
of the argument. Therefore, in all cases the variables $V_{n}(z)$ 
for each $n>0$ are
eliminated from the system of equations induced by the grammar
$\mathfrak{G}$. We assumed that $\mathbf{V}$ was the only variable
loading indices, so all other grammar variables either unload/pop
indices or are terminals. Consequently, the remaining functions
describe a finite triangular system, with finitely many finite
recurrences of bounded depth and known initial conditions.  The
solution of this simplified system is $S(z)$.
\end{proof}

We observe that the expression for $V_{0}(z)$ derived above actually converges
as an analytic function in a neighborhood of zero (see section~\ref{sec:ogf}
above). It turns out that the balanced hypothesis used above is 
already satisfied.

\begin{lem}
\label{lem:balanced1}
Suppose the indexed grammar $\mathfrak{G}$ is unambiguous, strongly reduced,
$\varepsilon$-free, and has only one index symbol $f$ (other than \$). Then 
$\mathfrak{G}$ is balanced.
\end{lem}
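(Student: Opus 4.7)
The plan is to bound the maximum stack depth $k^{*}$ attained in any derivation $\mathbf{S}\overset{*}{\longrightarrow}w$ linearly in $|w|$, by combining an $\varepsilon$-freeness argument controlling pops with an unambiguity-based pumping argument controlling pushes. Since only one index symbol $f$ appears (besides $\$$), every stack takes the form $f^{k}\$$ and is identified with the integer $k\geq 0$. Passing to the full derivation tree $T$ for $\mathbf{S}\overset{*}{\longrightarrow}w$, any intermediate sentential form corresponds to a cut of $T$, so the maximum stack depth across sentential forms equals the maximum depth attained at any node of $T$. It therefore suffices to show $k^{*}\leq C|w|+K$ for constants depending only on $\mathfrak{G}$.

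The first ingredient, using $\varepsilon$-freeness, is that every pop rule $\mathbf{A}_{f}\to\alpha$ has $|\alpha|\geq 1$, and by reducedness each symbol in $\alpha$ eventually yields at least one terminal of $w$. Distinct pop applications lie in disjoint subtrees of $T$ and contribute to disjoint positions of $w$, so the total number of pop applications across $T$ is at most $|w|$. Along any root-to-node path, the stack depth equals the net count of pushes minus pops; consequently if pushes can be bounded linearly in $|w|$ as well, we are done.

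Bounding pushes is the heart of the argument and requires the unambiguity hypothesis via a pumping construction. Let $N$ be the number of non-terminals and $M$ the maximum right-hand-side length in $\mathfrak{G}$, and call a push step \emph{unpaid} if its rule contributes no terminals to $w$, either directly in its right-hand side or via off-path subtrees. If some root-to-leaf path of $T$ contained a run of more than $NM$ consecutive unpaid pushes, pigeonhole on non-terminal labels at the successive stack depths of the run would produce two steps processing the same $\mathbf{V}$ at distinct depths $k_{1}<k_{2}$, yielding a sub-derivation $\mathbf{V}_{f^{k_{1}}\$}\overset{*}{\longrightarrow}\mathbf{V}_{f^{k_{2}}\$}$ of pure pushes producing no terminals. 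Because indexed-grammar productions act only on the top of the stack, this sub-derivation can be inserted a second time into the parse tree, and the downstream derivation continues unchanged on the top of the (now deeper) stack, producing the same terminal word $w$ but with a different parse tree, contradicting unambiguity. Hence every $NM$ pushes along a root-to-leaf path produce at least one terminal, giving $(\textrm{pushes})\leq NM\cdot|w|+NM$ and thus $k^{*}\leq NM\cdot|w|+NM$, establishing balancedness with $C=NM$ and $K=NM$.

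The main obstacle is formalizing the pumping step rigorously: verifying that inserting a pure-push pump segment yields a well-formed alternative parse tree of the same terminal word $w$. This requires showing that any downstream rule applicable at some stack depth remains applicable when all downstream stack depths are shifted uniformly upward by $k_{2}-k_{1}$, and that the downstream terminal output is invariant under this shift. For productions of the form $\mathbf{A}_{f}\to\alpha$ and the stack-agnostic $\mathbf{A}\to\alpha$ this follows immediately from the top-of-stack-only nature of indexed-grammar productions; the delicate case is rules that refer specifically to the bottom marker $\$$, which one handles by a case analysis combined with a secondary pigeonhole to bound the depth at which any such rule can fire under unambiguity.
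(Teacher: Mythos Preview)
Your pumping step does not work as stated. In the reduced form every push rule is $\mathbf{A}\to\mathbf{B}_f$ with a single symbol on the right, so every push is ``unpaid'' and after $N$ consecutive pushes some variable $\mathbf{V}$ repeats, giving a pure-push segment $\mathbf{V}_{f^{k_1}\$}\overset{*}{\to}\mathbf{V}_{f^{k_2}\$}$. But inserting this segment a second time uniformly deepens every downstream stack by $d=k_2-k_1$, and then each downstream application of a rule $\mathbf{A}_\$\to\alpha$ finds the stack equal to $f^d\$$ rather than $\$$ and does not fire. The pumped object is therefore not a parse tree of $w$ (and typically not a complete parse tree at all). Concretely, in the unambiguous, reduced, $\varepsilon$-free, one-index grammar $\mathbf{S}\to\mathbf{T}_\$$, $\mathbf{T}\to\mathbf{T}_f\mid\mathbf{V}$, $\mathbf{V}_f\to a\mathbf{V}$, $\mathbf{V}_\$\to a$, the one-step push cycle $\mathbf{T}\to\mathbf{T}_f$ can be repeated freely, and doing so produces $a^{n+2}$ rather than a second parse of $a^{n+1}$. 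Your proposed ``secondary pigeonhole'' cannot help here, since $\$$-rules fire only at depth $0$ and there is no range of depths to pigeonhole over. (A minor aside: your pop bound is both incorrect---in a chain $\mathbf{A}_f\to\mathbf{B}$, $\mathbf{B}_f\to\mathbf{C}$ the two pops lie in nested, not disjoint, subtrees and account for the same terminal positions---and unnecessary, since stack depth is at most the number of pushes along the path.)

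The paper's argument avoids this obstruction by working on the \emph{unloading} side. It shows that if an unloading cycle fails to lengthen the sentential form or add a terminal, then some sentential form $G$ (modulo indices) recurs at two index levels $a>b$. Ambiguity is then obtained by altering the \emph{initial} load rather than inserting anything mid-derivation: load $a-b$ fewer $f$'s at the start, apply exactly the same sequence of unloading steps (each examines only the top symbol $f$, which is still present), and arrive directly at $G$ with level $b$; the identical tail $G_{f^b\$}\overset{*}{\to}w$ then gives a second derivation of the same word. Trimming the load at the beginning never asks a $\$$-rule to fire on a non-$\$$ stack, which is precisely where your mid-derivation pump breaks down.
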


\begin{proof}
If the language produced by $\mathfrak{G}$ is finite, there is nothing to
prove so we may assume the language is infinite. Let us define a special 
sequence of grammar productions used in producing a terminal word. Suppose
a sentential form contains several variables, each of which is ready for
unloading of the index symbol $f$. A \textit{step} will consist of unloading
a single $f$ from each of these non-terminals, starting from the leftmost
variable. After the step, each of these
variables will hold an index string that is exactly one character shorter than 
before. 

Consider a sentential form $F$ containing one or more non-terminals 
in the unloading stage and each of whose index strings are 
of length at least $N\gg0$. These symbols can only by unloaded at the rate 
of one per production rule (this is the reduced hypothesis) and we'll
only consider changing $F$ by steps. 

On the other hand there, are only finitely many
production rules to do this unloading of finitely many variables. 
Thus for large $N$ there is a cycle of production rules as indices are unloaded,
with cycle length bounded by a global constant $C>0$ which depends only
on the grammar. Furthermore, this
cycle is reached after at most $K<C$ many productions. Let $F'$ 
denote the sentential form that results from $F$ as one such cycle is 
begun and let $F''$ be the sentential form after the cycle is applied to
$F'$. 

Consider lengths of sentential forms (counting terminals and variables but 
ignoring indices). Since the grammar is reduced and $\varepsilon$-free, 
each grammar
production is a non-decreasing function of lengths. Thus
$\vert F''\vert\geq\vert F'\vert\geq\vert F\vert$. Discounting any indices,
the equality of sentential forms $F''=F'$ is not possible because this implies 
ambiguity. 

We claim that either $F''$ is longer than $F'$ or that $F''$ has more terminals
than $F'$. If not, then $F''$ has exactly the same terminals as $F'$, and each 
has the same quantity of variables. There are only finitely many arrangements 
of terminals and variables for this length and for large $N$ we may loop
stepwise through the production cycle arbitrarily often and thus repeat exactly
a sentential form (discounting indices). This implies our grammar is ambiguous 
contrary to hypothesis. 

Thus after $C$ steps the sentential forms either obtain at least one new 
terminal or 
non-terminal. In the latter case, variables must convert into terminals on
\$ (via the reduced, unambiguous, $\varepsilon$-free hypotheses). There will 
be at least one terminal per step in the final output word $w$. We obtain the
inequality $(N-K)/C \leq\vert w\vert $ which establishes the lemma.
\end{proof}

\subsection{A collection of examples}
We illustrate the method, its power and its limitations with three
examples. The first two examples exhibit languages with intermediate 
growth and show that some of the hypotheses of 
Proposition~\ref{prop:push-to-inf} can be relaxed. We are unable to 
resolve the third example to our satisfaction.

The first example is originally due to~\cite{GrMa1999} and features
the indexed language \[\mathfrak{L}_{G/M}=\left\{
  ab^{i_{1}}ab^{i_{2}}\cdots ab^{i_{k}}\ :\ 0\leq i_{1}\leq
  i_{2}\leq\cdots\leq i_{k}\right\}\] with intermediate growth
(meaning that the number of words of length $n$ ultimately grows
faster than any polynomial in $n$ but more slowly than $2^{kn}$ for
any constant $k>0$). The question of whether a context-free language
could have this property was asked in~\cite{Flaj1987} and answered in
the negative~\cite{Inci2001,BrGi2002}. Grigorchuk and Mach\'{i}
constructed their language based on the generating function of Euler's
partition function. A word of length $n$ encodes a partition sum of
$n$. For instance, the partitions of $n=5$ are $1+1+1+1+1,\ 1+1+1+2,\
1+2+2,\ 1+1+3,\ 2+3,\ 1+4,\ 5$. The corresponding words in
$\mathfrak{L}_{G/M}$ are $aaaaa,\ aaaab,\ aabab,\ aaabbb,\ ababb,\
aabbb,\ abbbb$, respectively. The derivation below is ours.
\begin{example}
\label{ex:GM}
An unambiguous grammar for $\mathfrak{L}_{G/M}$ is\[
\mathbf{S}\rightarrow\mathbf{T}_{\$}\qquad\mathbf{T}\rightarrow\mathbf{T}_{f}\vert\mathbf{GT}\vert\mathbf{G}\qquad\mathbf{G}_{f}\rightarrow\mathbf{G}b\qquad\mathbf{G}_{\$}\rightarrow a\]
The latter two productions imply that 
$\mathbf{G}_{f^{m}\$}\overset{*}{\to}ab^{m}$
or in terms of functions $G_{m}(z)=z^{m+1}$. A typical parse tree is
illustrated in Figure~\ref{fig:part}.
\begin{figure}
\center
\begin{tikzpicture}[level/.style={sibling distance=60mm/#1,level distance=10mm}]
\node  (z){$\mathbf{S}$}
 child {node (U) {$\mathbf{T}_{f^{i_1}\$}$} edge from parent [->] 
    child  {node (V) {$\mathbf{G}_{f^{i_1}\$}$}
          child  {node (b) {$ab^{i_1}$} edge from parent node  [left=-2pt] {{\tiny$*$}}}
              }
    child  {node (R) {$\mathbf{T}_{f^{i_1}\$}$}
           child {node  (V2) {$\mathbf{T}_{f^{i_2}\$}$} 
                    child {node (V3) {$\mathbf{G}_{f^{i_2}\$}$} 
                           child {node  (w2) {$ab^{i_2}$} edge from
                             parent node  [left=-2pt] {{\tiny$*$}}  }
                           }
                    child {node (Ti2) {$\mathbf{T}_{f^{i_2}\$}$}
                              child {node (Ti3) {$\mathbf{T}_{f^{i_3}\$}$} 
                              child {node (Tik) {$ab^{i_3}$} edge from
                                         parent node [left=-2pt] {{\tiny$*$}}}
                              child {node (V4) {$\mathbf{T}_{f^{i_k}\$}$} edge from parent[dashed]
                                     child {node  (w4)
                                       {$ab^{i_k}$} edge from parent
                                       [solid] node  [left=-2pt] {{\tiny$*$}}}
                                     } edge from parent node  [left=-2pt] {{\tiny$*$}}
                                   }  
                     }edge from parent node [left=-2pt] {{\tiny$*$}}}
             } node  [left=-2pt] {{\tiny$*$}}
       };
\end{tikzpicture}

\caption{A typical parse tree in the grammar 
\[\mathbf{S}\rightarrow\mathbf{T}_{\$}\quad
 \mathbf{T}\rightarrow\mathbf{T}_{f}\vert\mathbf{GT}\vert\mathbf{G}\quad
 \mathbf{G}_{f}\rightarrow\mathbf{G}b\quad\mathbf{G}_{\$}\rightarrow
 a\]}
\label{fig:part}
\end{figure}

The second grammar production group transforms to
\[
T_{m}(z)=T_{m+1}(z)+G_{m}(z)T_{m}(z)+G_{m}(z)\ .\]
Substitution and solving for $T_{m}$ gives \[
T_{m}(z)=\frac{z^{m+1}+T_{m+1}(z)}{1-z^{m+1}}\ .\]
Iterating this recurrence yields a kind of inverted continued fraction:\[
S(z)=T_{0}(z)=\frac{z+T_{1}(z)}{1-z}=\frac{z+\frac{z^{2}+T_{2}(z)}{1-z^{2}}}{1-z}=\frac{z+\frac{z^{2}+\frac{z^{3}+\
     \Ddots}{1-z^{3}}}{1-z^{2}}}{1-z}\:.\]
Equivalently, this recurrence can be represented as
\begin{align*}
\frac{z+T_{1}(z)}{1-z}&=\frac{z(1-z^{2})+z^{2}+T_{2}(z)}{(1-z)(1-z^{2})}=\frac{z(1-z^{2})(1-z^{3})+z^{2}(1-z^{3})+z^{3}+T_{3}(z)}{(1-z)(1-z^{2})(1-z^{3})}\\
\intertext{or}
S(z)&=\frac{z}{1-z}+\frac{z^{2}}{(1-z)(1-z^{2})}+\frac{z^{3}}{(1-z)(1-z^{2})(1-z^{3})}+\cdots+\frac{z^{k}+T_{k}(z)}{\prod_{n=1}^{k}(1-z^{n})}\:.
\end{align*}
Even though this grammar allows the index loading variable $\mathbf{T}$ to 
appear
on the right side of the production $\mathbf{T}\to\mathbf{GT}$ (contrary to
one of the hypotheses in Proposition~\ref{prop:push-to-inf}) the 
convergence proof of Proposition~\ref{prop:push-to-inf} is applicable to 
the expression above allowing us to push $T_{k}(z)$ off to infinity:
\[
S(z)=\sum_{j\geq1}\frac{z^{j}}{(1-z)(1-z^{2})\cdots(1-z^{j})}\ .\]
Here we recognize a classic combinatorial summation of partitions
in term of their largest part~\cite[Example I.7]{FlSe2009}. Thus,
we have recovered the ordinary generating function for partitions,
$S(z)=\sum_{n\geq1}p(n)z^{n}$, where the coefficients belong to Euler's
partition sequence $p(n)$. Since we can also write \[
\sum_{n\geq1}p(n)z^{n}=\prod_{n\geq1}\frac{1}{1-z^{n}}\ ,\]
it is true that $S(z)$ has a dense set of singularities on the unit
circle and is not D-finite. 
\end{example}

In general, allowing an index loading variable $\mathbf{V}$ to appear on 
both sides of a context-free rule implies that the
corresponding DSV system of equations will have an algebraic (but
not necessarily linear) equation expressing $V_n(Z)$ in terms of $z$
and $V_{n+1}(z)$. However, multiple occurrences of $\mathbf{V}$ on the right 
side of such a production yield an ambiguous grammar with meaningless
$S(z)$ in the corresponding DSV reduction! In fact, suppose
$$\mathbf{S}\to\mathbf{V}_\$ \qquad \mathbf{V}\to \mathbf{V}_f 
\vert \alpha\mathbf{V}\beta\mathbf{V}\gamma $$
comprise part of a reduced indexed grammar where 
$\alpha, \beta, \gamma\in ( \cT\cup\cN\cup\varepsilon )^* $. Then 
the distinct production chains
$$\mathbf{S} \overset{*}{\to} \mathbf{V}_{ff\$} \to
\alpha\mathbf{V}_{ff\$}\beta\mathbf{V}_{ff\$}\gamma \eqno(+)$$
and
$$\mathbf{S} \overset{*}{\to} \mathbf{V}_{f\$}\to
\alpha\mathbf{V}_{f\$}\beta\mathbf{V}_{f\$}\gamma\to
\alpha\mathbf{V}_{ff\$}\beta\mathbf{V}_{f\$}\gamma\to
\alpha\mathbf{V}_{ff\$}\beta\mathbf{V}_{ff\$}\gamma 
\eqno(++)$$
yield identical output streams. 

On the other hand, Example~\ref{ex:GM} shows that a single instance of
the loading symbol can appear on the right side of a context-free rule
without leading to ambiguity of the underlying grammar. It is not hard
to see that the resulting DSV equation is always linear in $V_n(z)$. We have
proved

\begin{prop}
 \label{prop:push-to-inf II} Let $\mathfrak{G}=(\cN,\cT,\cI,\cP,\bS)$ be
 an unambiguous, balanced (or $\varepsilon$-free), indexed grammar in strongly
 reduced form
 for some non-empty language $\mathfrak{L}$ with
 $\cI=\{f\}$. Suppose that $\mathbf{V}\in\cN$ is the
 only non-terminal that loads $f$. Then the DSV
 system of equations defining
 $S(z)$ reduces to finitely many bounded recurrences with initial
 conditions whose solution is the growth function for $\mathfrak{L}$.
\end{prop}

\begin{example}
\label{ex:ig} Another series with intermediate growth can be realized
as the ordinary generating function of the following indexed grammar:
\begin{eqnarray*}
&\mathbf{S}\rightarrow\mathbf{C}\vert\mathbf{C}\mathbf{T}_{\$}\qquad\mathbf{C}\rightarrow b\mathbf{C}\vert\varepsilon\qquad\mathbf{T}\rightarrow\mathbf{T}_{f}\vert\mathbf{W}\qquad\mathbf{W}_{f}\rightarrow\mathbf{VWX}\\
&\mathbf{V}_{f}\rightarrow
aa\mathbf{V}\qquad\mathbf{V}_{\$}\rightarrow
aa\qquad\mathbf{W}_{\$}\rightarrow a\qquad\mathbf{X}\rightarrow a\vert
b
\end{eqnarray*}
As usual, we use index \$ to indicate the bottom of the stack, with
$f$ being the only actual index symbol. (The reader may notice that the rule
$\mathbf{S}\to\mathbf{CT_\$} $ is yet another relaxation of the hypotheses of
Proposition~\ref{prop:push-to-inf} that does not affect its conclusion.)
A typical parse tree is given
in Figure~\ref{fig:intermed}. From this we see that the language generated (unambiguously) is \[
\mathfrak{L}_{int}=\left\{ b^{*}\left(\varepsilon\ \vert\
   a^{n^{2}+n+1}\left(a\vert b\right)^{n}\right)\ :\
 n\geqslant0\right\} .\]

\begin{figure}
\center
\begin{tikzpicture}[
level 1/.style={sibling distance=30mm,level distance=10mm},
level 3/.style={sibling distance=25mm,level distance=10mm},
level 4/.style={sibling distance=45mm,level distance=12mm},
level 5/.style={sibling distance=25mm},
level 6/.style={sibling distance=15mm}
]
\node (a) {$\mathbf{S}$} {
 child {node (b) {$\mathbf{C}$} edge from parent [->] 
            child {node (c) {$b^*$} edge from parent node [left=0.5pt] {\tiny$*$}}}
 child {node (d) {$\mathbf{T}_{\$}$} edge from parent [->] 
           child  {node (e) {$\mathbf{T}_{f^n\$}$} 
                      child  {node (f) {$\mathbf{W}_{f^n\$}$}
                                 child {node  (g) {$\mathbf{V}_{f^{n-1}\$}$} 
                                          child {node (h) {$a^{2n}$} edge from
                                                   parent node  [left=-2pt] {{\tiny$*$}} }
                                          }
                                 child {node  (i) {$\mathbf{W}_{f^{n-1}\$}$} 
                                          child {node  (j) {$\mathbf{V}_{f^{n-2}\$}$} 
                                                    child {node (k) {$a^{2(n-1)}$} edge from
                                                      parent node  [left=-2pt] {{\tiny$*$}} }
                                                     }
                                          child {node  (l)  {$\mathbf{W}_{f^{n-2}\$}$}
                                                   child {node (l1) {} edge from parent [dashed]}
                                                   child {node (l2) {$\mathbf{W}_{\$}$} edge from parent [dashed]
                                                              child{node (l4) {$a$} edge from parent [solid]}}
                                                   child {node (l3) {} edge from parent [dashed]}
                                                 }
                                         child {node  (m) {$\mathbf{X}_{f^{n-2}\$}$} 
                                                   child {node (n) {$a|b$} edge from
                                                   parent node [left=-2pt] {{\tiny$*$}} }
                                                   }
                                         }
                                 child {node  (o) {$\mathbf{X}_{f^{n-1}\$}$} 
                                          child {node (m) {$a|b$} edge from
                                                   parent node  [left=-2pt] {{\tiny$*$}} }
                                          }
                           }
               edge from parent node [left=-2pt] {{\tiny$*$}} }
            }
  };
\end{tikzpicture}
\caption{A typical parse tree in the grammar
\[
\mathbf{S}\rightarrow\mathbf{C}\vert\mathbf{C}\mathbf{T}_{\$}\qquad\mathbf{C}\rightarrow b\mathbf{C}\vert\varepsilon\qquad\mathbf{T}\rightarrow\mathbf{T}_{f}\vert\mathbf{W}\qquad\mathbf{W}_{f}\rightarrow\mathbf{VWX}\]
\[
\mathbf{V}_{f}\rightarrow aa\mathbf{V}\qquad\mathbf{V}_{\$}\rightarrow aa\qquad\mathbf{W}_{\$}\rightarrow a\qquad\mathbf{X}\rightarrow a\vert b\]
}
\label{fig:intermed}
\end{figure}

We can derive the generating function in the usual fashion. Note the
shortcuts $\mathbf{X}_{f^{n}\$}\rightarrow(a\vert b)$ (regardless
of indices) and $\mathbf{V}_{f^{n}\$}\overset{*}{\rightarrow}a^{2n}\mathbf{V}_{\$}\rightarrow a^{2n+2}$.
Starting with \[
\mathbf{W}_{ff\$}\rightarrow\mathbf{V}_{f\$}\mathbf{W}_{f\$}\mathbf{X}_{f\$}\overset{*}{\rightarrow}a^{4}\mathbf{W}_{f\$}(a\vert b)\rightarrow a^{4}\mathbf{V}_{\$}\mathbf{W}_{\$}\mathbf{X}_{\$}(a\vert b)\overset{*}{\rightarrow}a^{4}a^{2}a(a\vert b)^{2}\]
one can use induction to derive
\[
\mathbf{W}_{f^{n}\$}\overset{*}{\rightarrow}a^{2n}\cdots a^{4}a^{2}a(a\vert b)^{n}=a^{n(n+1)}a(a\vert b)^{n}=a^{n^{2}+n+1}(a\vert b)^{n}.\]
In terms of generating functions these shortcuts imply $W_{n}(z)=z^{n^{2}+n+1}2^{n}z^{n}=2^{n}z^{(n+1)^{2}}$;
also $C(z)=\frac{1}{1-z}$. Put this all together to get
\begin{align*}
S(z)&=C(z)+C(z)T_{0}(z)=C\left(1+T_{0}\right)=C\left(1+T_{1}+W_{0}\right)=\\
&=C(1+T_{2}+W_{0}+W_{1})=\cdots=C\left(1+\sum_{n=0}^{\infty}W_{n}\right)\\
&=\frac{1}{1-z}\left(1+\sum_{n=0}^{\infty}2^{n}z^{(n+1)^{2}}\right).
\end{align*}
Write as a sum of rational functions and expand each geometric series:
\[
\begin{array}{rcl@{}l@{}l@{}l@{}l@{}l@{}l@{}l@{}l@{}}
S(z) & = & \multicolumn{7}{l}{\frac{1}{1-z}+\frac{z}{1-z}+\frac{2z^{4}}{1-z}+\frac{4z^{9}}{1-z}+\frac{8z^{16}}{1-z}+\cdots}\\[3mm]
& = & 1 & +z+z^{2}+z^{3} & +z^{4} & +z^{5} & +z^{6} & +z^{7} & +z^{8} & +z^{9} & +\cdots\\
&  &  & +z+z^{2}+z^{3} & +z^{4} & +z^{5} & +z^{6} & +z^{7} & +z^{8} & +z^{9} & +\cdots\\
&  &  &  & +2z^{4} & +2z^{5} & +2z^{6} & +2z^{7} & +2z^{8} & +2z^{9} & +\cdots\\
&  &  &  &  &  &  &  &  & +4z^{9} & +\dots\\
&  &  &  &  &  &  &  &  & \qquad\ddots\end{array}\]
and so forth. Sum the columns and observe that the coefficient of
each $z^{n}$ is a power of $2$, with new increments occurring when
$n$ is a perfect square. Thus \[
S(z)={\displaystyle \sum_{n=0}^{\infty}}2^{\left\lfloor \sqrt{n}\right\rfloor }z^{n}\]
and the coefficient of $z^{n}$ grows faster than any polynomial (as
$n\rightarrow\infty$) but is sub-exponential. 
\end{example}
The indexed grammars used in applications (combings of groups, combinatorial
descriptions, etc) tend to be reasonably simple and most use one index
symbol. Despite the success of our many examples, Propositions~\ref{prop:push-to-inf} /~\ref{prop:push-to-inf II} 
do \emph{not}
guarantee that an explicit closed formula for $S(z)$ can always be
found. 
\begin{example}\label{ex:difficult_recursion}
Consider the following balanced grammar:\[
\mathbf{S}\rightarrow\mathbf{T}_{\$}\qquad\mathbf{T}\rightarrow\mathbf{T}_{f}\vert\mathbf{N}\qquad\mathbf{N}_{f}\rightarrow a\mathbf{N}\vert b^{2}\mathbf{M}\qquad\mathbf{M}_{f}\to ab\mathbf{NM}\qquad\mathbf{M}_{\$},\mathbf{N}_{\$}\rightarrow\varepsilon\]
The hypotheses of Proposition~\ref{prop:push-to-inf II} are satisfied so we
can push $T_n$ to infinity and obtain

\[
S(z)=T_{0}=N_{0}+T_{1}=N_{0}+N_{1}+T_{2}=\cdots=\sum_{n\geq0}N_{n}(z)\ .\]
However, the recursions defining $N_{n}(z)$ are intertwined and formidable:
\[
N_{n}(z)=zN_{n-1}+z^{2}M_{n-1}\qquad\mathrm{and}\qquad M_{n}(z)=z^{2}N_{n-1}M_{n-1}\quad\forall n\geq1\]
with $N_{0}=1=M_{0}$. It is possible to eliminate $M$ but the resulting
nonlinear recursion\[
N_{n}(z)=zN_{n-1}+z^{2}N_{n-1}N_{n-2}-z^{3}N_{n-2}^{2}\]
does not appear to be a bargain (it is possible that a multivariate generating
function as per Example~\ref{ex:BG} may be helpful). 
\end{example}

\section{The case of several index symbols}
\label{sec:multiple}
Multiple index symbols increase the expressive power, and under
certain conditions, and by grouping stacks into equivalence classes we can apply a similar technique.

Our next example uses two index symbols (in addition to \$) in an
essential way.

\begin{example}
\label{ex:serial} Define $\mathcal{\mathfrak{L}}_{serial}=\left\{ \left(ab^{i}c^{j}\right)^{+}\ :\ 1\leq i\leq j\right\} $.
Consider the grammar:
\begin{eqnarray*} 
&\mathbf{S}\rightarrow\mathbf{T}_{\$}\qquad\mathbf{T}\rightarrow\mathbf{T}_{g}\vert\mathbf{U}_{f}\qquad\mathbf{U}\rightarrow\mathbf{U}_{f}\vert\mathbf{VR}\vert\mathbf{V}\qquad\mathbf{R}\rightarrow\mathbf{VR}\vert\mathbf{V}\\
&\mathbf{V}\rightarrow
a\mathbf{BC}\qquad\mathbf{B}_{f}\rightarrow\mathbf{B}b\qquad\mathbf{B}_{g}\rightarrow\varepsilon\qquad\mathbf{C}_{f}\rightarrow\mathbf{C}c\qquad\mathbf{C}_{g}\rightarrow
c\mathbf{C}\qquad\mathbf{C}_{\$}\rightarrow\varepsilon
\end{eqnarray*}
Observe that the two index symbols are loaded serially: all $g$'s
are loaded prior to any $f$ so each valid index string will be of
the form $f^{+}g^{*}\$$. We also have the shortcuts $\mathbf{C}_{f^{m}g^{n}\$}\overset{*}{\to}c^{m}\mathbf{C}_{g^{n}\$}\overset{*}{\to}c^{m+n}$
and $\mathbf{B}_{f^{m}g^{n}\$}\overset{*}{\to}b^{m}\mathbf{B}_{g^{n}\$}\rightarrow b^{m}\varepsilon=b^{m}$
and consequently $\mathbf{V}_{f^{m}g^{n}\$}\rightarrow
a\mathbf{B}_{f^{m}g^{n}\$}\mathbf{C}_{f^{m}g^{n}\$}\overset{*}{\to}ab^{m}c^{m+n}$. A
typical parse tree is given in Figure~\ref{fig:serial}. 
\begin{figure}
\center
\begin{tikzpicture}[level/.style={sibling distance=80mm/#1},level distance=10mm]
\node  (z){$\mathbf{S}$}
 child {node (U) {$\mathbf{U}_{f^mg^n\$}$} edge from parent [->] 
    child  {node (V) {$\mathbf{V}_{f^mg^n\$}$}
          child  {node (b) {$ab^nc^{n+m}$} edge from parent node  [left=-2pt] {{\tiny$*$}}}
              }
    child  {node (R) {$\mathbf{R}_{f^mg^n\$}$}
           child {node  (V2) {$\mathbf{V}_{f^mg^n\$}$}
                   child {node  (w1) {$ab^nc^{n+m}$} edge from parent node  [left=-2pt] {{\tiny$*$}}}
                   }
           child {node  (R2) {$\mathbf{R}_{f^mg^n\$}$} edge from parent[dashed]
                    child {node (V3) {$\mathbf{V}_{f^mg^n\$}$} edge from parent[solid]
                           child {node  (w2) {$ab^nc^{n+m}$} edge
                             from parent node [left=-2pt] {{\tiny$*$}}}
                           }
                    child {node (R3) {$\mathbf{R}_{f^mg^n\$}$} edge from parent[solid]
                           child {node (V3) {$\mathbf{V}_{f^mg^n\$}$} 
                                     child {node  (w3)
                                       {$ab^nc^{n+m}$} edge from parent node [left=-2pt] {{\tiny$*$}}}
                                     }  
                          }  
                     } 
             } node [left=-2pt] {{\tiny$*$}}
       };
\end{tikzpicture}
\caption{A typical parse tree in the grammar \[ 
\mathbf{S}\rightarrow\mathbf{T}_{\$}\quad\mathbf{T}\rightarrow\mathbf{T}_{g}\vert\mathbf{U}_{f}\quad\mathbf{U}\rightarrow\mathbf{U}_{f}\vert\mathbf{VR}\vert\mathbf{V}\quad\mathbf{R}\rightarrow\mathbf{VR}\vert\mathbf{V}\]
\[\mathbf{V}\rightarrow
a\mathbf{BC}\quad\mathbf{B}_{f}\rightarrow\mathbf{B}b\quad\mathbf{B}_{g}\rightarrow\varepsilon\quad\mathbf{C}_{f}\rightarrow\mathbf{C}c\quad\mathbf{C}_{g}\rightarrow
c\mathbf{C}\quad\mathbf{C}_{\$}\rightarrow\varepsilon
\]
}
\label{fig:serial}
\end{figure}

The special form of the index strings ensures that such a string is
uniquely identified solely by the number of $f$'s and number of $g$'s
it carries. Consequently, the induced function $V_{f^{m}g^{n}\$}(z)$
can be relabelled more simply as $V_{m,n}(z)$, and similarly with functions
$T,U,R$. Working in the reverse order of the listed grammar productions,
we have the identities

\[
V_{m,n}(z)=z^{2m+n+1}\quad\mathrm{and}\quad R_{m,n}=\frac{V_{m,n}}{1-V_{m,n}}=\frac{z^{2m+n+1}}{1-z^{2m+n+1}}\ .\]
The grammar production $\mathbf{U}\rightarrow\mathbf{U}_{f}\vert\mathbf{VR}\vert\mathbf{V}$
implies for fixed $n>0$ that

\[
U_{1,n}(z)=U_{2,n}+\left(V_{1,n}R_{1,n}+V_{1,n}\right)=U_{3,n}+\left(V_{2,n}R_{2,n}+V_{2,n}\right)+\left(V_{1,n}R_{1,n}+V_{1,n}\right).\]
The hypothesis of Proposition~\ref{prop:push-to-inf II} are satisfied in
that we are dealing with a balanced grammar where currently only one
index symbol is being loaded onto one variable. Therefore for fixed
$n$ we can push $U_{m,n}(z)$ off to infinity and obtain

\[
U_{1,n}(z)=\sum_{m\geq1}\left(V_{m,n}R_{m,n}+V_{m,n}\right)=\sum_{m\geq1}R_{m,n}(z)=\sum_{m\geq1}\frac{z^{2m+n+1}}{1-z^{2m+n+1}}\ .\]
Our general derivation proceeds as follows:
\[
S(z)=T_{0,0}=T_{0,1}+U_{1,0}=T_{0,2}+U_{1,1}+U_{1,0}=\cdots=T_{0,k+1}+\sum_{n=1}^{k}U_{1,n}\]
Proposition~\ref{prop:push-to-inf II} can be invoked again to eliminate $T$.
We find that\[
S(z)=\sum_{n\geq1}\sum_{m\geq1}\frac{z^{2m+n+1}}{1-z^{2m+n+1}}=\sum_{j\geq1}\frac{z^{3j}}{(1-z^{j})(1-z^{2j})}=\sum_{i\geq1}\frac{z^{3i}+z^{4i}}{(1-z^{2i})^{2}}\]
with the latter two summations realized by expanding geometric series
and/or changing the order of summation in the double sum. In any event,
$S(z)$ has infinitely many singularities on the unit circle and is
not D-finite. 
\end{example}

\subsection{``Encode first, then copy''}\label{sec:encodefirst}
It is worth noting the reason why the copying schema used 
above succeeds in indexed grammars but fails for context-free
grammars.
The word $ab^{i}c^{j}$ is first encoded as an index string attached
to $\mathbf{V}$ and only then copied to $\mathbf{VR}$ (the grammar
symbol $\mathbf{R}$ is a mnemonic for {}``replicator''). This ensures
that $ab^{i}c^{j}$ is faithfully copied. Slogan: {}``encode first, then
copy''. Context-free grammars are limited to {}``copy first, then
express'' which does not allow for fidelity in copying.

We would like to generalize the previous example. The key notion was
the manner in which the indices were loaded. 
\begin{defn}
Suppose that $\mathfrak{G}$ is an unambiguous indexed grammar with
index alphabet $I=\left\{ f_{1},f_{2},\dots,f_{n}\right\} $ such
that every every index string $\sigma$ has form $f_{n}^{*}f_{n-1}^{*}\cdots f_{1}^{*}\$$.
We say the indices are \emph{loaded serially} in $\mathfrak{G}$.\end{defn}
\begin{cor}
Assume $\mathfrak{G}$ is an unambiguous balanced indexed grammar
with variable set $V$, non-terminal alphabet $\cT$, and index
alphabet $I=\left\{ f_{1},\dots,f_{n}\right\} $. Suppose all indices
are loaded serially onto respective variables $\mathbf{T}^{\left\{ 1\right\} },\dots,\mathbf{T}^{\left\{ n\right\} }$
and in the indicated order. Then each function family $T^{\left\{ j\right\} }(z)$
can be eliminated ({}``pushed to infinity'') and the system of equations
defining $S(z)$ can be reduced to finitely many recursions as per
the conclusion of Proposition~\ref{prop:push-to-inf}.\end{cor}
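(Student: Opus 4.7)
The plan is to apply Lemma~\ref{lem:push-to-inf} $n$ times in succession, eliminating the index families in the order $\mathbf{T}^{\{n\}}, \mathbf{T}^{\{n-1\}}, \ldots, \mathbf{T}^{\{1\}}$ --- that is, peeling off the index stack from top to bottom. This mirrors the elimination carried out in Example~\ref{ex:serial}, where $\mathbf{U}$ (the $f$-loader) was pushed to infinity first and $\mathbf{T}$ (the $g$-loader) second; the corollary is, morally, the observation that this maneuver iterates.

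For the first elimination, I would exploit the serial-loading hypothesis to factor every index string uniquely as $f_n^{m_n}\tau$ with $\tau \in f_{n-1}^*\cdots f_1^*\$$. Because $\mathbf{T}^{\{n\}}$ is the unique loader of $f_n$, every production on $\mathbf{T}^{\{n\}}$ is either of the loading form $\mathbf{T}^{\{n\}} \to \mathcal{U}\mathbf{T}^{\{n\}}_{f_n}\mathcal{U}'$ or of a form that hands any remaining $f_n$'s (intact, by stack-copying) off to variables that will never load an $f_n$ again. Freezing the suffix $\tau$ gives, for each such $\tau$, a one-parameter family $\{T^{\{n\}}_{m_n,\tau}(z)\}_{m_n \geq 0}$ whose translated equations have precisely the shape $T^{\{n\}}_{m_n,\tau}(z) = T^{\{n\}}_{m_n+1,\tau}(z) + W_{m_n,\tau}(z)$ (or the product analogue) treated in Lemma~\ref{lem:push-to-inf}. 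The balanced hypothesis, applied to $\mathbf{T}^{\{n\}}_{f_n^{m_n}\tau}$, then stabilizes the first $\Theta(m_n)$ coefficients after $m_n$ iterations, so I can pass to the limit and obtain $T^{\{n\}}_{0,\tau}(z) = \sum_{m_n \geq 0} W_{m_n,\tau}(z)$ uniformly in $\tau$.

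Substituting these closed expressions eliminates the entire $\mathbf{T}^{\{n\}}$ family from the system. The residual system now involves only the loaders $\mathbf{T}^{\{n-1\}}, \ldots, \mathbf{T}^{\{1\}}$ and the residual index alphabet $\{f_1,\ldots,f_{n-1}\}$; loading is still serial, balance is inherited from the original grammar (every word realized in the reduced system is realized in the original one, so the same $C, K$ work), and unambiguity is preserved because the parse trees feeding distinct $W_{m_n,\tau}$ remain pairwise disjoint after substitution. Thus the top-of-stack index in the reduced system is $f_{n-1}$, loaded by the unique variable $\mathbf{T}^{\{n-1\}}$, and the same procedure applies. After $n$ rounds, every loading family has been pushed to infinity and only the finitely many bounded recurrences described in Lemma~\ref{lem:push-to-inf} remain.

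The main obstacle, I expect, will be formally justifying that one round of elimination really does produce a system to which the next round applies. One must verify, equation by equation, that balance and unambiguity are inherited after the infinite sums $\sum_{m_n} W_{m_n,\tau}$ replace the eliminated non-terminal; convergence of those sums as formal power series again comes from balance, and unambiguity of the residual system has to be traced back to disjointness of the original parse trees. These are routine but finicky verifications, and they are the only place in the argument that does not reduce mechanically to the single-index lemma.
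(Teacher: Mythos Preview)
Your proposal is correct and follows essentially the same route as the paper: both eliminate the loading variables in the order $\mathbf{T}^{\{n\}},\mathbf{T}^{\{n-1\}},\dots,\mathbf{T}^{\{1\}}$, using the serial-loading hypothesis at each stage to isolate a single active index symbol and invoke Lemma~\ref{lem:push-to-inf}, with the whole argument phrased as an induction whose basis and inductive steps coincide. The paper's version is terser---it notes only that productions for $\mathbf{T}^{\{n\}}$ cannot involve any $\mathbf{T}^{\{j\}}$ with $j<n$ (serial loading) and cannot unload (unambiguity), then declares the induction ``obvious''---whereas you are more explicit about freezing the suffix $\tau$ and about the inheritance of balance and unambiguity; but there is no substantive difference.
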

\begin{proof}
We have assumed that $\mathbf{T}^{\left\{ n\right\} }$ is the last
variable to load indices and is loaded with $f_{n}$ only, so $\mathbf{T}^{\left\{ n\right\} }$
carries an index string $\sigma$ of form $f_{n}^{*}\cdots f_{2}^{*}f_{1}^{*}\$$.
Indeed, any grammar production having $\mathbf{T}^{\left\{ n\right\} }$
on the left side will have a right side of two types: a string $\mathcal{U}_1\in\left(V\vert\cT\right)^{*}$
that loads $f_{n}$ onto $\mathbf{T}^{\left\{ n\right\} }$or a string
$\mathcal{U}_2\in\left(V\vert\cT\right)^{*}$without any loading
of indices. Neither of these types will include any variable $\mathbf{T}^{\left\{ j\right\} }$
for $j<n$ (by the serial loading hypothesis) nor will there be any
unloading of indices (by the unambiguous hypothesis). Consequently,
the equations having $T_{k}^{\left\{ n\right\} }(z)$ on the left
side have on their right side products and sums involving no $T^{\left\{ j\right\} }(z)$
for $j<n$ but only $T_{k}^{\left\{ n\right\} }(z)$ and functions
that define finite recurrences. The hypotheses of Proposition~\ref{prop:push-to-inf}
apply to this situation and $T^{\left\{ n\right\} }$ can be pushed
to infinity.

The previous paragraph is both the basis step and induction step of
an obvious argument that eliminates $T^{\left\{ n-1\right\} }$ then
$T^{\left\{ n-2\right\} }$ and so on till $T^{\left\{ 1\right\} }$
.
\end{proof}
In the case of a grammar with multiple index symbols, we would like
to be able to replace an unwieldy expression like $A_{gfghgf\$}(z)$
with $A_{2,3,1}(z)$ where the subscripts indicate two occurrences
of $f$, three of $g$, and one $h$. This is certainly possible for
a grammar with only one index symbol (excluding the end of stack marker
\$) or several symbols loaded serially as per the previous Corollary,
but is not possible in general. 
\begin{example}
(Ordering matters) Consider the language $\mathcal{\mathfrak{L}}_{ord}$
generated by the indexed grammar below.

\[
\mathbf{S}\rightarrow\mathbf{T}_{\$}\qquad\mathbf{T}\rightarrow\mathbf{T}_{\alpha}\vert\mathbf{T}_{\beta}\vert\mathbf{N}\qquad\mathbf{N}_{\alpha}\rightarrow a\mathbf{N}\qquad\mathbf{N}_{\beta}\rightarrow b\mathbf{N}b\mathbf{N}b\qquad\mathbf{N}_{\$}\rightarrow\varepsilon\]
When applying the DSV transformations to this grammar
we would like to write $N_{1,1}(z)$ as the formal power series corresponding
to the grammar variable $\mathbf{N}$ with any index string having
one $\alpha$ index and one $\beta$ index, followed by the end of
stack marker \$. Note the derivations $\mathbf{S}\overset{*}{\to}\mathbf{N}_{\alpha\beta\$}\overset{*}{\to}abbb$
and $\mathbf{S}\overset{*}{\to}\mathbf{N}_{\beta\alpha\$}\overset{*}{\to}babab$.
Even though both intermediate sentential forms have one of each index
symbol, followed by the end of stack marker \$, they produce distinct
words of differing length. Thus using subscripts to indicate
the quantity of stack indices cannot work in general without some
consideration of index ordering.

We note that the grammar is reduced and balanced. It is also unambiguous,
which can be verified by induction. In fact, if $\sigma\in\left(\alpha\vert\beta\right)^{*}\$$
is an index string such that $\mathbf{N}_{\sigma}\overset{*}{\to}w$
where $w$ is a terminal word of length $n$, then $\mathbf{N}_{\alpha\sigma}\overset{*}{\to}aw$
and $\mathbf{N}_{\beta\sigma}\overset{*}{\to}bwbwb$ where
$\vert aw\vert=n+1$ and $\vert bwbwb\vert=2n+3$. Suppose that all
words $w\in\mathcal{\mathfrak{L}}_{ord}$ of length $n$ or less are
produced unambiguously. Consider a word $v$ of length $n+1$. Either
$v=aw$ or $v=bw'bw'b$ for some shorter words $w,w'\in\mathfrak{L}_{ord}$
that were produced unambiguously by hypothesis. Clearly neither of
these forms for $v$ can be confused since one starts with $a$ and
the other with $b$.

The proof of Proposition~\ref{prop:push-to-inf} can be applied
to eliminate the $T_{\sigma}(z)$. Solving for $S(z)$ via the generalized
DSV procedure gives 
\[ S(z)=\sum_{\sigma\in I} N_{\sigma}(z) \]
where the sum is over all index strings $\sigma$. It is unfeasible to
simplify further because
the number of grammar functions $N_{\sigma}(z)$ grows exponentially
in the length of $\sigma$ without suitable simplifying recursions. 
\end{example}
The previous example showed two non-terminals having index strings
with the same quantity of respective symbols but in different orders
leading to two distinct functions. We can define a condition
that ensures such functions are the same. 
\begin{defn*}
Let $\mathcal{A}=\left\{ \alpha_{1},\alpha_{2},\dots,\alpha_{n}\right\} $
denote a finite alphabet. The \emph{Parikh vector} associated to $\sigma\in\mathcal{A}$
records the number of occurrences of each $\alpha_{i}$ in $\sigma$
as $x_{i}$ in the vector $[x_{1,}x_{2,}\dots x_{n}]$ (see~\cite{Pari1966}).
Define two strings $\sigma,\tau\in\mathcal{A}^*$ to be \textit{Parikh
equivalent} if they map to the same Parikh vector (in other words
$\tau$ is a permutation of $\sigma$). When $\mathcal{A}$ is the
index alphabet for a grammar, we extend this idea to non-terminals
and say $\mathbf{V}_{\sigma}$ is Parikh equivalent to $\mathbf{V}_{\tau}$
if $\sigma$ and $\tau$ map to the same Parikh vector. 
\end{defn*}
The following lemma gives sufficient conditions that
allow simplifying the ordering difficulty for function subscripts.
Its proof is immediate. 
\begin{lem}
\label{thm:Parikh} Assume $\mathfrak{G}$ is a nontrivial balanced
indexed grammar. Suppose each pair of Parikh equivalent index strings
$\sigma,\tau$ appended to a given grammar variable $\mathbf{V}$
result in identical induced functions $V_{\sigma}(z)\equiv V_{\tau}(z)$.
Then the functions induced from $\mathbf{V}$ can be consolidated
into equivalence classes (where we replace index string subscripts
by their respective Parikh vectors) without changing the solution
$S(z)$ of the system of DSV equations. 
\end{lem}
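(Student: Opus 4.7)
The plan is to observe that the hypothesis lets us replace every function subscript by the canonical representative of its Parikh class, and then to check that each DSV equation, after this replacement, depends only on Parikh vectors of the subscripts. Since the authors describe the proof as immediate, the work is really just bookkeeping: confirm that the three production templates $\mathbf{A}\to\alpha$, $\mathbf{A}\to\mathbf{B}_{f}$, and $\mathbf{A}_{f}\to\alpha$ each induce relations among the $V_{\sigma}(z)$ that are invariant under Parikh equivalence, so the quotient system is well-defined.

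First, I would fix a canonical linear ordering of the index alphabet $I$ and declare the canonical representative of a Parikh class to be the index string with letters appearing in that order. Choose for each variable $\mathbf{V}$ and each Parikh vector $\vec{x}$ a single function symbol $V_{\vec{x}}(z)$, to be identified with $V_{\sigma}(z)$ for every $\sigma$ whose Parikh vector is $\vec{x}$. The hypothesis says exactly that this identification is consistent, i.e., no function is simultaneously forced to equal two different power series.

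Next, I would translate each type of production and verify Parikh-invariance. A production $\mathbf{A}\to\alpha$ transforms into the family $A_{\sigma}(z)=\alpha_{\sigma}(z)$, where each non-terminal in $\alpha$ inherits the stack $\sigma$; both sides therefore depend only on the Parikh class of $\sigma$, yielding $A_{\vec{x}}(z)=\alpha_{\vec{x}}(z)$. A push rule $\mathbf{A}\to\mathbf{B}_{f}$ gives $A_{\sigma}(z)=B_{f\sigma}(z)$, and since the Parikh vector of $f\sigma$ is $\vec{x}+e_{f}$, the family collapses to $A_{\vec{x}}(z)=B_{\vec{x}+e_{f}}(z)$. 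A pop rule $\mathbf{A}_{f}\to\alpha$ applies to index strings of the form $f\tau$, and the equation $A_{f\tau}(z)=\alpha_{\tau}(z)$ becomes $A_{\vec{x}+e_{f}}(z)=\alpha_{\vec{x}}(z)$; here the only mild point to notice is that every Parikh vector with a positive $f$-coordinate is realized by \emph{some} string beginning with $f$ (e.g., the string with $f$ moved to the front), so the hypothesis lets us transfer the pop equation to the canonical representative.

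Finally, since the start symbol $\mathbf{S}$ carries only the index string $\$$, the function $S(z)$ corresponds to the zero Parikh vector in the reduced system, and the collection of Parikh-indexed equations is obtained from the original DSV system by identifying equations within each equivalence class. This identification removes only redundancies forced by the hypothesis, so any solution of the original system restricts to a solution of the reduced system and conversely. The only step that could be considered a genuine obstacle is the routine verification for the pop rule above, but it reduces to noting that Parikh equivalence is compatible with prepending a fixed letter; no substantive analytic or combinatorial input is required.
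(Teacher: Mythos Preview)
Your proposal is correct and matches the paper's approach---indeed, the paper gives no proof at all beyond the remark that it ``is immediate,'' and what you have written is precisely the routine bookkeeping that justifies that remark. Your case-by-case check of the three production templates (copy, push, pop) and the observation that the pop rule transfers to the canonical representative because every Parikh vector with positive $f$-coordinate is realized by a string beginning with $f$ are exactly the verifications one would supply if asked to expand ``immediate'' into a paragraph.
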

We have already used this lemma in Example~\ref{ex:serial} above.
We illustrate with another example. 
\begin{example}
\label{ex:ww} Consider the non-context-free language $\mathfrak{L}_{double}=\left\{ ww\ :\ w\in(a\vert b)^{*}\right\} $
produced by the grammar
\[
\mathbf{S}\to\mathbf{T}_{\$}\qquad\mathbf{T}\to\mathbf{T}_{\alpha}\vert\mathbf{T}_{\beta}\vert\mathbf{RR}\qquad\mathbf{R}_{\alpha}\to a\mathbf{R}\qquad\mathbf{R}_{\beta}\to b\mathbf{R}\qquad\mathbf{R}_{\$}\to\varepsilon\ .\]
Suppose $\sigma,\tau\in(\alpha\vert\beta)^{*}\$$ are Parikh equivalent
index strings of length $n$. It is clear that $R_{\sigma}(z)=z^{n}=R_{\tau}(z)$.
In fact, every string $u\in\left(\alpha\vert\beta\right)^{n}\$$ implies
$R_{u}(z)=z^{n}$, regardless of the particular distribution of $\alpha$
and $\beta$ in $u$. Instead of using the equivalence classes $R_{i,j}(z)$
where $[i,j]$ is the Parikh vector for $u$, let $R_{n}(z)$ denote
the equivalence class of all such induced functions $R_{u}(z)$
where $u\in(\alpha\vert\beta)^{n}\$$, and define $T_{n}(z)$ similarly.
We will abuse notation and refer to the elements of these classes
as $R_{n}(z)$ or $T_{n}(z)$, respectively. The grammar equations
become

\[
S(z)=T_{0}=2T_{1}+R_{0}^{2}=R_{0}^{2}+2\left(2T_{2}+R_{1}^{2}\right)=R_{0}^{2}+2R_{1}^{2}+4R_{2}^{2}+\cdots\]
where we can push the $T_{n}(z)$ to infinity as per the proof of
Proposition~\ref{prop:push-to-inf II}. Therefore \[
S(z)=\sum_{n\geq0}2^{n}R_{n}^{2}(z)=\sum_{n\geq0}2^{n}z^{2n}=\frac{1}{1-2z^{2}}\ .\]
\end{example}

\section{Further examples related to number theory}
\label{sec:numtheory}
In addition to our example from~\cite{GrMa1999} we have the following.
\begin{example}\label{ex:divisors}
 Define $\mathfrak{L}_{div}=\left\{ a^{n}\left(b^{n}\right)^{*}\ :\
   n>0\right\} $ which is generated by the unambiguous balanced
 grammar\footnote{As written, this grammar is not reduced because the
   rule $\bT\to\bA_f\mathbf{R}_f$ loads two indices
   simultaneously. However, by replacing that production by the pair
   $\bT\to\mathbf{U}_f\ ,\ \mathbf{U}_f\to \bA\mathbf{R}$ 
we obtain an equivalent grammar in
   reduced form. We use the former rule for brevity.}
\begin{eqnarray*}
&\mathbf{S}\to\mathbf{T}_{\$}\qquad\mathbf{T}\to\mathbf{T}_{f}\vert\mathbf{A}_f\mathbf{R}_f\qquad\mathbf{R}\to\mathbf{BR}\vert\varepsilon\\
&\mathbf{A}_{f}\to
a\mathbf{A}\qquad\mathbf{A}_{\$}\to\varepsilon\qquad\mathbf{B}_{f}\to
b\mathbf{B}\qquad\mathbf{B}_{\$}\to\varepsilon
\end{eqnarray*}

We see some familiar shortcuts: $\mathbf{A}_{f^{n}\$}\to a^{n}$ and
$\mathbf{B}_{f^{n}\$}\to b^{n}$. In terms of functions this means
$A_{n}(z)=z^{n}=B_{n}(z)$ and furthermore $R_{n}=B_{n}R_{n}+1$ implies
$R_{n}(z)=\frac{1}{1-z^{n}}$. Thus our main derivation becomes
\[
S(z)=T_{0}=T_{1}+A_{1}R_1=T_{2}+A_1R_1+A_2R_2=\cdots=\sum_{n\geq1}A_nR_n=\sum_{n\geq1}\frac{z^{n}}{1-z^{n}}\ .\]
Expand each rational summand into a geometric series and collect
terms \[
\begin{array}{rcl@{ }l@{}l@{}l@{}l@{}l@{}l@{}l@{}l@{}l@{}l@{}}
S(z) & = & \multicolumn{8}{l}{\frac{z}{1-z}+\frac{z^{2}}{1-z^{2}}+\frac{z^{3}}{1-z^{3}}+\frac{z^{4}}{1-z^{4}}+\cdots}\\[3mm]
& = & z & +z^{2} & +z^{3} & +z^{4} & +z^{5} & +z^{6} & +z^{7} & +z^{8} & +z^{9} & +z^{10} & +\cdots\\
&  &  & +z^{2} &  & +z^{4} &  & +z^{6} &  & +z^{8} &  & +z^{10} & +\cdots\\
&  &  &  & +z^{3} &  &  & +z^{6} &  &  & +z^{9} &  & +\cdots\\
&  &  &  &  & +z^{4} &  &  &  & +z^{8} &  &  & +\dots\\
&  &  &  &  &  & +z^{5} &  &  &  &  & +z^{10} & +\dots\\
&  &  &  &  &  &  & \ddots\\[3mm]
& = & z & +2z^{2} & +2z^{3} & +3z^{4} & +2z^{5} & +\dots\end{array}\]
We see the table houses a sieve of Eratosthenes and we find that \[
S(z)=z+2z^{2}+2z^{3}+3z^{4}+2z^{5}+4z^{6}+\cdots=\sum_{n\geq1}\tau(n)z^{n}\]
where $\tau(n)$ is the number of positive divisors of $n$. Again,
$S(z)$ has infinitely many singularities on the unit circle and is
not D-finite. 
\end{example}

\begin{example}\label{ex:comp}
Let $\mathfrak{L}_{comp}=\left\{ a^{c}\ :\ c\ \rm{is\ composite}\right\} $
denote the composite numbers written in unary. A generative grammar
is

\[
\mathbf{S}\to\mathbf{T}_{f\$}\qquad\mathbf{T}\to\mathbf{T}_{f}\vert\mathbf{R}\qquad\mathbf{R}\to\mathbf{RA}\vert\mathbf{AA}\]
\[
\mathbf{A}_{f}\to a\mathbf{A}\qquad\mathbf{A}_{\$}\to a\]
with sample derivation \[
\mathbf{S}\overset{*}{\to}\mathbf{R}_{f^{n}\$}\to\mathbf{R}_{f^{n}\$}\mathbf{A}_{f^{n}\$}\overset{*}{\to}\mathbf{R}_{f^{n}\$}\left(\mathbf{A}_{f^{n}\$}\right)^{m}\to\left(\mathbf{A}_{f^{n}\$}\right)^{m+1}\overset{*}{\to}a^{(n+1)(m+1)}.\]

This is certainly an ambiguous grammar because there is a separate,
distinct production of $a^{c}$ for each nontrivial factorization of
$c$. (Note: one can tweak the grammar to allow the trivial
factorizations $1\cdot c$ and $c\cdot1$. The resulting language
becomes the semigroup $a^{+}$ isomorphic to $\mathbb{Z}^{+}$ but the
generating function of all grammar productions is the familiar
$\sum\tau(n)z^{n}$ which we saw in Example~\ref{ex:divisors}.)
\end{example}

Suppose we want the generating function for the \emph{sum} of positive
divisors $\sum\sigma(n)z^{n}$? Then our table expansion above would
look like \[
\begin{array}{rcl@{ }l@{}l@{}l@{}l@{}l@{}l@{}l@{}l@{}l@{}l@{}}
 S(z) & = & z & +2z^{2} & +3z^{3} & +4z^{4} & +5z^{5} & +6z^{6} & +\cdots\\
 &  &  & +2z^{2} &  & +4z^{4} &  & +6z^{6} & +\cdots\\
 &  &  &  & +3z^{3} &  &  & +6z^{6} & +\cdots\\
 &  &  &  &  & +4z^{4} &  &  & +\dots\\
 &  &  &  &  &  & \ddots\\[3mm]
 & = & z & +4z^{2} & +6z^{3} & +12z^{4} & +\dots\end{array}\]
and now each row has closed form $\frac{z^{n}}{\left(1-z^{n}\right)^{2}}$.
Our goal is to modify the grammar of the previous example to obtain
$\sum\frac{z^{n}}{\left(1-z^{n}\right)^{2}}$. At first glance it
would seem that we only need replace the grammar rule $\mathbf{T}\to\mathbf{T}_{f}\vert\mathbf{A}_{f}\mathbf{R}_{f}$
with $\mathbf{T}\to\mathbf{T}_{f}\vert\mathbf{A}_{f}\mathbf{R}_{f}\mathbf{R}_{f}$
which replaces $S(z)=\sum A_{n}R_{n}$ with $\sum A_{n}R_{n}R_{n}$
. However this creates ambiguity because we can produce $ab$ in two
ways: $S\overset{*}{\to}A_{f\$}R_{f\$}R_{f\$}\overset{*}{\to}ab\varepsilon$
and $S\overset{*}{\to}A_{f\$}R_{f\$}R_{f\$}\overset{*}{\to}a\varepsilon b$.
An unambiguous solution is to \emph{create copies} $\mathbf{U}$
and $\mathbf{C}$ of the original grammar variables $\mathbf{B}$
and $\mathbf{R}$, respectively, so that $\mathbf{T}\to\mathbf{T}_{f}\vert\mathbf{A}_{f}\mathbf{R}_{f}\mathbf{U}_{f}$
is the replacement and we add new rules $\mathbf{U}\to\mathbf{UC}\vert\varepsilon$,
$\mathbf{C}_{f}\to c\mathbf{C}$ , and $\mathbf{C}_{\$}\to\varepsilon$.
The details are left to the reader, including how to re-write these changes in reduced form.

We conclude this section with the example of Bridson and Gilman~\cite{BrGi1996} 
alluded to in our introduction. They derive words that encode the 
\emph{cutting sequence} of the line segment 
from the origin to each integer lattice point in the Euclidean plane as the 
segment crosses the horizontal $(h)$ and vertical $(v)$ lines that join 
lattice points. 
Such sequences are made unique by declaring that as a segment passes through a
lattice point, the corresponding cutting sequence adds $hv$. For
instance, the cutting sequence for the segment ending at $(2,4)$ is the word
$hhvhhv$. 

\begin{example}\label{ex:BG} The grammar is given by
\[
\mathbf{S}\to\mathbf{T}_{\$}\qquad\mathbf{T}\to\mathbf{T}_{q}\vert\mathbf{T}_{r}
\vert\mathbf{U}_{q}\qquad\mathbf{U}\to\mathbf{VU}\vert\mathbf{V}
\qquad\mathbf{V}_{q}\to\mathbf{HV}\qquad\mathbf{V}_r\to\mathbf{V} \]
\[
\mathbf{V}_{\$}\to v\qquad \mathbf{H}_{q}\to\mathbf{H}\qquad
\mathbf{H}_{r}\to\mathbf{VH}\qquad\mathbf{H}_{\$}\to h\ .  \]
Attempting to solve the grammar equations immediately runs into
difficulty.  The valid sentential forms $\mathbf{V}_{qqrq\$}$ and
$\mathbf{V}_{qqqr\$}$ produce words of length eight and seven
respectively, which disallows the idea of simplification via Parikh
vectors. Indeed, a brute-force numerical attempt using the DSV method
has exponential time complexity in the length of index strings.

We circumvent this problem by introducing
two commuting formal variables $x,y$. Define $L(x,y)=\sum_{i,j>0}L_{i,j}x^iy^j$ 
where $L_{i,j}$ counts the number of cutting sequence words that have $i$ many
occurrences of $v$ and $j$ many $h$'s. The coefficients $L_{i,j}$ comprise
a frequency distribution on the first quadrant of the integer lattice. This
distribution contains more information than 
the one dimensional generating function $S(z)$. On the other 
hand, we can recover $S(z)=\sum_{n>1}v_nz^n$ by the formula 
$v_n=\sum_{i+j=n}L_{i,j}$. 

To compute the $L_{i,j}$, let us simplify the grammar by ignoring the 
context-free copying productions 
$\mathbf{U}\to\mathbf{VU}\vert\mathbf{V}$, change the
loading productions to $
\mathbf{T}\to\mathbf{T}_{q}\vert\mathbf{T}_{r}\vert\mathbf{V}_{q}$,
and begin by unloading sentential
forms $\mathbf{V}_{q\sigma}$, where $\sigma\in\left( q\vert r\right)^*\$ $. As per
the proof of Lemma~\ref{lem:balanced1} we define a \emph{step} as the 
application of the leftmost stack symbol to all non-terminals in a sentential 
form. If we start with an index string attached to $\mathbf{V}$
of length $l$, then after $n$ steps each 
non-terminal will have the same index string of length $l-n$.
For instance if we start with
$\mathbf{V}_{qqr\sigma}$ then the first step is
$\mathbf{V}_{qqr\sigma}\to\mathbf{H}_{qr\sigma}\mathbf{V}_{qr\sigma}$. The second step 
comprises two productions and ends with 
$\mathbf{H}_{r\sigma}\mathbf{H}_{r\sigma}\mathbf{V}_{r\sigma} $ while the third
step unloads the $r$ from each index and results in
$\mathbf{V}_\sigma \mathbf{H}_\sigma \mathbf{V}_\sigma \mathbf{H}_\sigma 
\mathbf{V}_\sigma $.

Let $x_i$ be the number of $\mathbf{V}$ non-terminals after performing 
step $i$, 
and let $y_i$ be the number of $\mathbf{H}$ 
non-terminals after performing step $i$. For a step unloading $q$ we observe 
the recursions $y_i = y_{i-1} + x_{i-1}$ and $x_i = x_{i-1}$. Likewise for $r$
we see recursions $y_i = y_{i-1}$ and $x_i = y_{i-1} + x_{i-1}$. 
Our simplified grammar always begins the unloading stage with 
$\mathbf{V}_{q\sigma}$ and thus we obtain the initial condition $x_1=1=y_1$
regardless of $\sigma$. 
This condition, along with our recursions above imply that for each $i$, the
pair of integers $(x_i,y_i)$ are relatively prime.

Suppose that $n$ is the last step needed to produce a cutting sequence
word from $\mathbf{V}_{q\sigma}$. Identify each pair $(x_n,y_n)$ with the
corresponding point in the integer lattice, so $x_n$ is the total number of
vertical lines crossed in the cutting sequence and $y_n$ is the number of
horizontal lines crossed. (Note that $x_n$ and $y_n$ depend on $\sigma$ as well
as $n$.)

We saw above that $\mathbf{V}_{qqr\$}\overset{*}{\to}\mathbf{V}_\$ 
\mathbf{H}_\$ \mathbf{V}_\$ \mathbf{H}_\$ \mathbf{V}_\$ 
\overset{*}{\to} vhvhv $ which is represented by $(3,2)$. The
pair $(2,3)$ is realized from $\mathbf{V}_{qrq\$}\overset{*}\to
hvhhv $. Indeed, the symmetry of the grammar implies that every generated 
pair $(x_n,y_n)$ has a generated mirror 
image $(y_n,x_n)$ obtained by transposing each $q$ and $r$ in the index
substring $\sigma$ attached to the initial unloading symbol 
$\mathbf{V}_{q\sigma}$.

We claim that every relatively prime pair of positive integers is realized as
$(x_n,y_n)$ for some cutting sequence word generated by our simplified
grammar. We show this by running in reverse the algorithm that generates cutting
sequences. Let $(i,j)\neq (1,1)$ denote a coprime pair and suppose by induction
that all other relatively prime pairs $(k,l)$ are the result of unique cutting
sequence words whenever ($k<i$ and $l\leq j$) or ($k\leq i$ and $l<j$), 
\emph{i.e.} whenever the point $(k,l)$ is strictly below or left of the 
point $(i,j)$. 
If $i<j$ then the letter $q$ was applied at the most recent step
with the previous pair being defined by $(i,j-i)$. 
On the other hand, if $i>j$ then the rightmost letter is $r$ and 
define the previous pair as $(i-j,j)$. In either case the new pair of
coordinates remain coprime and lie in the induction hypothesis zone. Note that
this is just the Euclidean algorithm for greatest common divisor and always 
terminates at $(1,1)$ when the starting pair $(i,j)$ are coprime.
Consequently  
the relatively prime pair $(i,j)$ is uniquely realized as
$(x_n,y_n)$ from some cutting sequence word $w$ generated by our simplified 
grammar. Furthermore $\mathbf{V}_{q\sigma}
\overset{*}{\to} w $ satisfies $\vert q\sigma\vert =n$ which
is the correct number of steps taken.

We apply our argument above to compute the two dimensional generating
function $L(x,y)$. For our
simplified grammar we have $L_{i,j}=1$ if the pair $(i,j)$ is relatively prime 
and $L_{i,j}$ vanishes otherwise. Equivalently, $L_{i,j}=1$ if and only if
$i$ and $i+j$ are coprime. To recover $S(z)=\sum_{n\geq 2}v_nz^n$ from $L(x,y)$ we
set $v_n=\sum_{i+j=n}L_{i,j}$, \emph{i.e.} we sum along slope $-1$ diagonal lines 
in quadrant one. Thus $v_n=\varphi (n)$ where $\varphi$ is Euler's totient 
function that counts
the number of integers $1\leq i<n$ that are coprime to $n$. Summary: we have 
successfully circumvented the exponential time complexity of computing
$S(z)=\sum_{\sigma\in (q\vert r)^*\$} V_{q\sigma}(z)$ and found that 
$S(z)=\sum_{n\geq 2}\varphi (n)z^n$.     

Recovering the original grammar by restoring the 
$\mathbf{U}$ productions allows for the construction of repeated
cutting sequences $w^t$, $w$ being the word associated to a coprime
lattice point $(i,j)$. This serves to add the lattice points 
$(ti,tj)$. Here we may assume $i$ and $j$ are relatively prime and $t\geq 2$
which makes these additions unique. (In fact, if $(ti,tj)=(sk,sl)$ for 
another coprime pair $(k,l)$, then both $s$ and $t$ are the 
greatest common divisor of the ordered 
pair and hence $s=t$.) The full grammar is in bijective correspondence with
the integer lattice strictly inside quadrant one. Words represent geodesics in 
the taxicab metric. Simple observation shows that the full growth series is represented by the rational
function 
\[ \sum_{n\geq 2}(n-1)z^n\ =\ \frac{z^2}{(1-z)^2} \]
and as a byproduct we have re-derived Euler's identity
\[ n-1 = \sum_{d|n, d>1}\varphi(d)   \qquad (!)\] 

\end{example}

\section{Ambiguity}
\label{sec:ambiguity}
We begin with the first published example of an inherently ambiguous 
context-free language~\cite{HoUl1979, Pari1966}. It
has an unambiguous indexed grammar. 
\begin{example}
Define $\mathfrak{L}_{amb}=\left\{ a^{i}b^{j}a^{k}b^{l}\::\ i,j,k,l\geq 1;\ i=k\
{\rm or} \ j=l\ \right\} $. The idea is to divide the language into a disjoint
union of the three sub-languages
\[ \mathfrak{L}_{X}=\left\{ a^{i}b^{j}a^{i}b^{l}\::\ j<l\ \right\} \quad
\mathfrak{L}_{Y}=\left\{ a^{i}b^{j}a^{i}b^{l}\::\ l<j\ \right\} \quad
\mathfrak{L}_{Z}=\left\{ a^{i}b^{j}a^{k}b^{j}\ \right\} \quad
\]
with no restrictions on $i,k$ other than that all exponents are at least one.
An indexed grammar is
\[
\mathbf{S}\rightarrow\mathbf{T}_{g\$}\qquad\mathbf{T}\rightarrow\mathbf{T}_{g}
\vert\mathbf{U}_{f}\vert\mathbf{Z}\qquad\mathbf{U}\to\mathbf{U}_{f}
\vert\mathbf{X}\vert\mathbf{Y}
\]

\[
\mathbf{X}\to\mathbf{ABAC} \qquad
\mathbf{Y}\to\mathbf{ACAB} \qquad
\mathbf{Z}\to\mathbf{DBEB}
\]

\[
\mathbf{A}_{f}\to a\mathbf{A}\qquad\mathbf{A}_{g}\to\varepsilon\qquad\mathbf{B}_{f}\to\mathbf{B}\qquad\mathbf{B}_{g}\to b\mathbf{B}\qquad\mathbf{B}_{\$}\to\varepsilon\]

\[
\mathbf{C}_{f}\to\mathbf{C}\qquad\mathbf{C}_{g}\to\ b\mathbf{C}\qquad
\mathbf{C}_{\$}\to b\mathbf{C}_{\$}\vert b
\qquad\mathbf{D}\to a\mathbf{D}\vert a\qquad\mathbf{E}\to a\mathbf{E}\vert a
\]
The reader is invited to draw the typical parse tree and verify that
the growth of this language is 
$\frac{z^{4}(1+3z)}{\left(1-z\right)^{3}\left(1+z\right)^{2}}$. 
\end{example}
Several other inherently ambiguous context-free language can be generated 
unambiguously
by an indexed grammar.
\begin{xca}
Another early example of an inherently 
ambiguous context-free language is featured in~\cite{ChSc1963}:  
$\mathfrak{L}=\left\{ a^{n}b^{m}c^{p}\::\ m=n>0\ {\rm or}\ m=p>0 \right\} $.
Write an unambiguous indexed grammar for it. Again, split the language into 
the disjoint union of three types of
words and build a grammar for each. The types are
$a^{n}b^{n}c^p$ with $0\leq p<n$, $a^{n}b^{n}c^p$ with $0<n<p$, and 
$a^{n}b^{p}c^p$ with $p>0$.
\end{xca}

Our examples beg the question: are there
inherently ambiguous indexed languages? Consider
Crestin's language of palindrome pairs defined by 
$\mathfrak{L}_{Crestin}=\left\{ vw\ :\ v,w\in\left(a\vert b\right)^{*},\
 v=v^{R}\ ,\ w=w^{R}\right\} $. It
is a {}``worst case'' example of an inherently ambiguous context-free
language (see~\cite{Flaj1987} and its references). We conjecture
that $\mathfrak{L}_{Crestin}$ remains inherently ambiguous as an
indexed language. What about inherently ambiguous languages that are
not context-free?

Consider the composite numbers written in unary as per Example~\ref{ex:comp}.
What would an unambiguous grammar for $\mathfrak{L}_{comp}$ look
like? We would need a unique factorization for each composite $c$. Since
the arithmetic that indexed grammars can simulate on unary output is discrete 
math (like 
addition and multiplication, no division or roots, etc), 
we need the Fundamental Theorem of Arithmetic. In fact, suppose
there is a different unique factorization scheme for the composites,
that doesn't involve a certain prime $p$. Then composite $c_{2}=p^{2}$
has only the factorization $1\cdot c_{2}$, and similarly $c_{3}=p^{3}$
has unique factorization $1\cdot c_{3}$ since $p\cdot c_{2}$ is
disallowed. But then $p^{6}=c_{2}\cdot c_{2}\cdot c_{2}=c_{3}\cdot c_{3}$
has no unique factorization. Therefore all primes $p$ are needed
for any unique factorization of the set of composites. Adding any
other building blocks to the set of primes ruins unique factorization.

Suppose we have an unambiguous indexed grammar for
$\mathfrak{L}_{comp}$.  It would be able to generate $a^{p^{k}}$ for
any prime $p$ and all $k>1$. This requires a copying mechanism (in the
manner of $\mathbf{R}$ in Examples~\ref{ex:serial} and~\ref{ex:comp})
and an encoding of $p$ into an index string (recall our slogan
{}``encode first, then copy'' from Section~\ref{sec:encodefirst}). In
other words, our supposed grammar for $\mathfrak{L}_{comp}$ must be
able to first produce its complement $\mathfrak{L}_{prime}$ and
\emph{encode these primes into index
 strings}. However,~\cite{PaDu1990} show that the set of index
strings associated to a non-terminal in an indexed grammar is
necessarily a regular language. On the other hand~\cite{Allen1968}
shows that the set of primes expressed in any base $m\geq 1$ does not form
a regular language. We find it highly unlikely that an indexed grammar
can decode all the primes from a regular set of index strings. We
conjecture that $\mathfrak{L}_{comp}=\left\{ a^{c}\ :\ c\ \mathrm{is\
   composite}\right\} $ is inherently ambiguous as an indexed
language.

Recall that a word is \emph{primitive} if it is not a power of another
word. In the copious literature on the subject it is customary to
let $\mathcal{Q}$ denote the language of primitive words over a two
letter alphabet. It is known that $\mathcal{Q}$ is not unambiguously
context-free (see ~\cite{Pete1994,Pete1996}, who exploits the 
original Chomsky-Sch\"utzenberger
theorem listed in Section 2 above). It is a widely believed conjecture
that $\mathcal{Q}$ is not context-free at all (see~\cite{DoHoItKaKa1994}).

$\mathfrak{L}'=\left\{ w^{k}\ :\ w\in\left(a\vert b\right)^{*},\
 k>1\right\} $ defines the complement of $\mathcal{Q}$ with respect
to the free monoid $\left(a\vert b\right)^{*}$. It is not difficult to
construct an ambiguous balanced grammar for $\mathfrak{L}'$ (a simple
modification of Example~\ref{ex:ww} will suffice). What about an
unambiguous grammar? Recall from~\cite{LySc1962} that
$w_{1}^{n}=w_{2}^{m}$ implies that each $w_{i}$ is a power of a common
word $v$. Thus to avoid ambiguity, each building block $w$ used to
construct $\mathfrak{L}'$ needs to be primitive. This means we must
not only be able to \emph{recreate} $\mathcal{Q}$ in order to generate
$\mathfrak{L}'$ unambiguously, we must be able to \emph{encode} each
word $w\in\mathcal{Q}$ as a string of index symbols, as per the
language of composites. We refer again to
Section~\ref{sec:encodefirst}. We find this highly unlikely and
we conjecture that $\mathfrak{L}'=\left\{ w^{k}\ :\ w\in\left(a\vert
   b\right)^{*},\ k>1\right\} $ is inherently ambiguous as an indexed
language.

\section{Open questions}

We observed that in many cases the generating function $S(z)$ of an indexed 
language
is an infinite sum (or multiple sums) of a family of functions related
by a finite depth recursion (or products/sums of the same). As we
mentioned earlier,~\cite{LK2003} give explicit sufficient conditions
for growth series of indexed languages on a unary alphabet. They also
show that such growth series are defined in terms of the recursions we
mentioned above.

Into what class do the generating functions of indexed languages fit?
Can we characterize the types of productions that lead to an infinite
number of singularities in the generating function? Ultimately, this
was a common property of many of the examples, and possibly one could
generalize the conditions that lead to an infinite number of
singularities in Example~\ref{ex:an2} to a general rule on production
types in the grammar. It seems that the foundation laid by Fratani and
Senizergues, in their work on catenative grammars is a very natural
starting point for such a study.

Can we characterize the expressive power of the grammars which satisfy
the hypotheses of Proposition~\ref{prop:push-to-inf II}? The alternate
characterizations of level 2 sequences in~\cite{Seni2007} might be
useful.  Can we show that $\{a^{p(n)}:p(n) \text{ is the $n$th prime}\}$
is a level 3 (or higher?) language? Perhaps this should precede a
search for an indexed grammar.

Is Crestin's language inherently ambiguous as an indexed language? What about
the composite numbers in unary or the complement of the primitive words?

In step with much modern automatic combinatorics, we would like to
build automated tools to handle these, and other questions related to
indexed grammars. Towards this goal the first author  has written a
parser/generator that inputs an indexed grammar and outputs words in
the language. It is licensed under GPLv3 and is available from the
authors. Is there a way to automate the process of pushing a set of
terminal variables off to infinity? 

Finally, we end where we began with the non-context-free language 
$\{a^{n}b^{n}c^{n}:n>0\}$.
It has a context-sensitive grammar
\[
\mathbf{S}\to abc\vert a\mathbf{BS}c\qquad\mathbf{B}a\to a\mathbf{B}\qquad b\mathbf{B}\to bb\]
for which the original DSV method works perfectly.
The method fails for several other grammars generating this same language.
What are the necessary and sufficient conditions to extend the method
to generic context-sensitive grammars? To what extent can we see these
conditions on the system of catenative recurrent relations?

\section*{Acknowledgements}
Daniel Jorgensen contributed ideas to the first two authors. The third
author offers thanks to Mike Zabrocki and the participants of the
Algebraic Combinatorics seminar at the Fields Institute for Research
in Mathematical Science (Toronto, Canada) for early discussions on
this theme. We are also grateful to the referees for pointing us to
several references. 


\end{document}